\newcommand{\baselocus}{\operatorname{\mathsf{Bs}}}
\newcommand{\parabaselocus}{\operatorname{\mathsf{PBs}}}
 \newcommand{\bfk}{\ensuremath{\mathbf{k}}} 
\newcommand{\catD}{{D}}
\newcommand{\cdbc}[1]{{D}^b_c(#1)}
\newcommand{\Hom}{{\operatorname{Hom}}}
\newcommand{\Perf}{{\operatorname{Perf }}}
\newcommand{\OO}{\mathcal{O}}
\DeclareMathOperator{\Pic}{{Pic}}
\DeclareMathOperator{\Spec}{{Spec}}
\DeclareMathOperator{\Kos}{{Kos}}
\DeclareMathOperator{\supp}{{supp}}
\newcommand{\bbR}{{\mathbb R}}
\newcommand{\cA}{{\mathcal A}}
\newcommand{\cB}{{\mathcal B}}
\newcommand{\cO}{{\mathcal O}}
\newcommand{\bL}{{\mathbf L}}
\newcommand{\pun}{{\scriptscriptstyle\bullet}}
\newcommand{\cplx}[1]{{{\mathcal #1}^{\scriptscriptstyle\bullet}}}
\newtheorem{thm}{Theorem}[section]
\newtheorem*{thm*}{Theorem}
\newtheorem{cor}[thm]{Corollary}
\newtheorem{lem}[thm]{Lemma}
\newtheorem{prop}[thm]{Proposition}
\theoremstyle{definition}
\newtheorem{defin}[thm]{Definition}
\theoremstyle{remark}
\newtheorem{rema}[thm]{Remark}
\newtheorem{eg}[thm]{Example}
\newenvironment{rem}{\begin{rema}}{\hfill\hspace{1pt}$\triangle$\end{rema}}
\numberwithin{equation}{section} %%
\begin{document}
\title[Indecomposability of derived categories]{Indecomposability of derived categories for arbitrary schemes}

\author[A. C. L\'opez Mart\'{\i}n, F. Sancho de Salas]{Ana Cristina L\'opez Mart\'{\i}n and Fernando Sancho de Salas}

\email{anacris@usal.es, fsancho@usal.es}

\thanks {{\it Author's address: }Departamento de Matem\'aticas, Universidad de Salamanca, Plaza de la Merced 1-4, 37008, Salamanca, tel: +34 923294456; fax +34 923294583.\\
Work supported by Grant PID2021-128665NB-I00 funded by MCIN/AEI/ 10.13039/501100011033 and, as appropriate, by “ERDF A way of making Europe”.}
\subjclass[2000]{Primary: 18E30, 14F05; Secondary: 18E25, 14H52} \keywords{Semiorthogonal decompositions, derived categories, indecomposable, perfect complexes,  minimal model program, stable curves, Kodaira degenerations}
\date{\today}

\begin{abstract}  We extend the criterion of Kawatani and Okawa for indecomposability of the derived category of a smooth projective variety to arbitrary schemes. For relative  schemes, we also give a criterion for the nonexistence of semiorthogonal decompositions that are linear over the base. These criteria are based on the base loci of the global or relative dualizing complexes.
\end{abstract}

%\date{\today}
%\thanks {}
%\begin{abstract}  
  %\end{abstract}

\maketitle
\setcounter{tocdepth}{1}
%%%%%%%%%%

{\small \tableofcontents }

\section*{Introduction} The  derived category of  sheaves of a  scheme  is a large invariant that encodes a lot of geometric information about the scheme and many other invariants like the Hochschild homology and the cohomology can be extracted out from it. However, working with this huge invariant is difficult and it is important to decompose it into smaller pieces. Naively, we can consider orthogonal decompositions of the derived category, but this notion is very restrictive because for connected schemes, any orthogonal decomposition is trivial (see Proposition \ref{p:conexo}).  The notion of semiorthogonal decomposition is more flexible and it seems to be the appropiate as semiorthogonal components are proved to be important tools in non-commutative algebraic geometry (see \cite{Kuz14} for a review).

Nevertheless, there are many examples of  projective varieties whose derived category has no non-trivial semiorthogonal decompositions and this question is directly related to the birational geometry  of the variety.

For a smooth projective variety $X$, Kawatani and Okawa gave in \cite{KawOka22} a general criterion that proves that the nonexistence of semiorthogonal decompositions of $\cdbc{X}$ is governed by the base locus of the canonical sheaf $\omega_X$ of $X$. The result allowed to give many examples of smooth varieties that are minimal in the minimal model program. The paracanonical base locus and the relative base locus of the canonical line bundle of the albanese morphism are also important in the context of semiorthogonal decompositions, as it is shown in recents works by Caucci and Lin  \cite{Lin2021, Caucci22}.

 Since schemes with singularities have to be allowed in the MMP, it is also an interesting problem to determine  for a singular scheme $X$, if its category of perfect complexes $\Perf({X})$ (resp.  its bounded derived category of coherent sheaves $\cdbc{X}$) is indecomposable (resp. weakly indecomposable), that is, it admits no non-trivial  (resp. admisible) semiorthogonal decompositions. For Cohen-Macaulay varieties over a field, this problem was considered by Spence in \cite{Spen22}.  In this paper, we tackle the problem for arbitrary schemes both in the absolute and in the relative situation.

On the one hand, we give the following generalization of Kawatani and Okawa's criterion to arbitrary (noetherian)  schemes.

\medskip
\noindent{\bf Theorem} (Theorem \ref{t:indescomPerf}). {\em  
Let $X$ be a  connected   scheme,   $k $ a ring  and $\pi\colon X\to\Spec k$ a proper morphism. Let  $\Perf(X)=\langle \mathcal{A},\mathcal{B}\rangle$ be a SOD of the subcategory of perfect complexes of $X$. Then, 

{\rm (1)} For each irreducible component $X_i$ of $X$, one of the following holds:
\begin{enumerate}\item[(1.1)] $\Kos({\bf f}_x)\in \mathcal{A}$ for any closed point $x\in X_i- \text{\rm Bs}|\pi|$ (Definition \ref{relativebaselocus}) and any system of parameters ${\bf f}_x$ of $\OO_{X,x}$. 
\item[(1.2)] $\Kos({\bf f}_x)\in \mathcal{B}$ for any closed point $x\in X_i- \text{\rm Bs}|\pi|$ and any system of parameters ${\bf f}_x$ of $\OO_{X,x}$. 
\end{enumerate}

{\rm (2)} If {\rm (1.1)} (resp. {\rm (1.2)}) is satisfied, for any $b\in\mathcal{B}$ (resp.  $a\in\mathcal{A}$), the support of $b|_{X_i}$ (resp. $a|_{X_i}$) is contained in $\text{\rm Bs}|\pi|\cap X_i$.

If we assume further that $X-\text{\rm Bs}|\pi|$ is connected, then  {\rm (1)}    (and consequently {\rm (2)}) holds for the whole $X$.}\medskip

The proof of this theorem  follows the same line than those in \cite{KawOka22} and \cite{Spen22}.
When $X$ is a smooth projective scheme over a field, there are two main ingredients in the proof of the Kawatani and Okawa's  criterion in \cite{KawOka22}. First, the use of the spanning class of $\cdbc{X}$ given by the skyscraper sheaves $\mathcal{O}_x$  to detect the support of an object in $\cdbc{X}$ and, second, the notion of base point of the dualizing sheaf that together with Serre duality allows to conclude. When $X$ is singular, the skyscraper sheaves are no longer a spanning class neither for $\cdbc{X}$ nor for $\Perf({X})$. For a  Cohen-Macaulay scheme $X$, the structure sheaves  $\mathcal{O}_{Z_x}$ of the locally complete intersection zero cycles supported on the closed points of $X$  do form a spanning class for both $\cdbc{X}$ and  $\Perf({X})$ (\cite{HLS08}) and they were used by Spence (\cite{Spen22}) to generalize  Kawatani and Okawa's criterion to projective Cohen-Macaulay varieties over a field. Spence himself considers the problem of definining the base locus when the dualizing sheaf is not necessarily a line bundle but just a coherent sheaf. 
If $X$ is no longer Cohen-Macaulay, we shall use the Koszul complexes introduced in \cite{S09} as a replacement of the locally complete intersection zero cycles. Essentially, for each closed point $x$ one takes a system of parameters $f_1,\dots ,f_n$ of the local ring $\OO$ at $x$ (that is, $\OO/(f_1,\dots ,f_n)$ is a zero-dimensional ring), then  the ordinary Koszul complex $\Kos(f_1,\dots,f_n)$ and extends it by zero out of $x$ to obtain the so called Koszul complexes in the paper. We shall prove in Lemma \ref{l:soporte}   that these complexes detect the support of any object in $\cdbc{X}$. Secondly, for an arbitrary scheme $X$, the dualizing complex $D_{X/k}$ (needed to apply Grothendieck duality in its most general form) is not in general a single sheaf (placed in some degree), so we need to define the base locus of an object $\mathcal{F}\in \cdbc{X}$ and the base locus $ \text{\rm Bs}|\pi|$ of a proper morphism $\pi\colon X\to S$ (see Definition \ref{relativebaselocus}).

When $S=\Spec k$ and $X$ is a proper Cohen-Macaulay scheme over $k$, $\text{\rm Bs}|\pi|$  coincides with the base locus $ \text{\rm Bs}|\omega_{X}|$ of the dualizing sheaf and the Koszul complex $\Kos({\bf f}_x)$ of a system of parameters of $\mathcal{O}_{X,x}$ is a resolution of the structure sheaf $\mathcal{O}_{Z_x}$ of the corresponding locally complete intersection zero cycle. Notice however that also in the Cohen-Macaulay case, our proof differs from Spence's proof because in \cite{Spen22} it is used implicitly the fact that if a morphism in the derived category is fibrewise null, then it is null; this is not true in general and our poof gives a solution.

Since we use neither the projectivity of the scheme nor that the scheme is over a base field, we are able to analise the case of arbitrary affine (noetherian) schemes, proving that an affine connected scheme does not admit semiorthogonal descompositions (Corollary \ref{affine:empty}). This corollary was also proved in \cite{EL2018} by different methods.

On the other hand, we also contemplate a similar criterion in the relative situation. If $f\colon X\to T$ is a morphism of schemes,  it is important to study semiorthogonal decompositions that are $T$-linear, that is, each component of the descomposition is stable under the action by pullbacks of elements in $\Perf({T})$ (see \cite{Kuz21}).  The relative criterion of indecomposability is the following 

\medskip
\noindent{\bf Theorem} (Theorem \ref{t:relativeindescomPerf}). {\em   
 Let $X$ be a connected scheme,  $f\colon X\to T$  a proper morphism of schemes. 
Let  $\Perf(X)=\langle \mathcal{A},\mathcal{B}\rangle$ be a $T$-linear SOD of the subcategory of perfect complexes of $X$. Then, 

{\rm (1)} For each irreducible component $X_i$ of $X$, one of the following holds:
\begin{enumerate}\item[(1.1)] $\Kos({\bf f}_x)\in \mathcal{A}$ for any closed point $x\in X_i- \text{\rm Bs}|f|$ and any system of parameters ${\bf f}_x$ of $\OO_{X,x}$. 
\item[(1.2)] $\Kos({\bf f}_x)\in \mathcal{B}$ for any closed point $x\in X_i- \text{\rm Bs}|f|$ and any system of parameters ${\bf f}_x$ of $\OO_{X,x}$. 
\end{enumerate}

{\rm (2)} If {\rm (1.1)} (resp. {\rm (1.2)}) is satisfied,  for any $b\in\mathcal{B}$ (resp. $a\in \mathcal{A}$), the support of $b|_{X_i}$ (resp. $a|_{X_i}$) is contained in $\text{\rm Bs}|f|\cap X_i$.

 If we assume further that  $X-\text{\rm Bs}|f|$ is connected, then  {\rm (1)}    (and consequently {\rm (2)}) holds for the whole $X$.

}\medskip

We finish the paper with some examples of schemes whose categories of perfect complexes  are indecomposable. Some of the most important are proper and connected schemes such that the base locus of the dualizing complex is finite (or empty) (Corollary \ref{c:dim0}),  and crepant resolutions of Gorenstein schemes (Corollary \ref{c:crepante}). 

In general, the indecomposability of $\Perf(X)$ implies the weakly indecomposabilty of $\cdbc{X}$. Furthermore, while this paper was in preparation, Kuznetsov and Shinder studied more deeply the relations between  admissible subcategories and semiorthogonal decompositions of $\cdbc{X}$ to those of $\Perf(X)$. In the recent paper \cite{KuzShi22},  they prove that if $ X$ is a projective scheme over a perfect field, then $\cdbc{X}$ is indecomposable if and only if $\Perf{X}$ is indecomposable.

Combining their result with our Theorem 3.2, one obtains new examples of minimal schemes in the MMP.

Finally, we want to mention that when  $X$ is a connected regular scheme and $Y$ is a scheme that admits a dualizing complex,   Theorem 2.6 has been proven independently by Okawa (see Theorem 3.10 in \cite{Oka2023}). Shortly after a preprint of this paper was made public, Okawa himself informed us about this fact. His definition of the base locus  differs from ours (Definition \ref{relativebaselocus}). Both definitions are compared in Remark  3.11 of \cite{Oka2023}.

 {\bf Acknowlegements} We would like to thank Evgeny Shinder for his comments and suggestions and for pointing out a mistake in the proof of Theorem \ref{t:indescomPerf} in an earlier version of the paper. We are grateful to Alexander Kuznetsov for his comments and for reading the manuscript.  We also thank Shinnousuke Okawa for his interest in this work and for sharing with us the draft of his article.

\subsubsection*{Conventions}

In this paper, all schemes are assumed to be noetherian.  For any scheme $X$ we denote by $\catD(X)$ the
derived category of complexes of $\cO_X$-modules with
quasi-coherent cohomology sheaves,   $\cdbc{X}$ the full subcategory of complexes with bounded and coherent cohomology and  $\Perf{(X)}$  the full  subcategory of perfect complexes.  

\section{Preliminares}

\subsection{Grothendieck duality}
Let $X$ be a scheme, $a,b\in\catD(X)$. We shall denote by $\mathbb{R}\cplx{Hom}(a,b)$ the  derived complex of sheaves of homomorphisms, $\mathbb{R}\cplx{\Hom}(a,b)$ the derived complex of homomorphisms, and $a\overset{\mathbb{L}}\otimes b$ the derived tensor product. One has
\[ \aligned \mathbb{R}\Gamma(X,  \mathbb{R}\cplx{Hom}(a,b)) & \simeq \mathbb{R}\cplx{\Hom}(a,b)
\\ \mathbb{R}\cplx{\Hom}(a,\mathbb{R}\cplx{Hom}(b,c) ) & \simeq \mathbb{R}\cplx{\Hom}(a\overset{\mathbb{L}}\otimes b,c).
\endaligned\]
For any $a\in\catD(X)$, $a^\vee$ denotes its derived dual: $a^\vee :=\mathbb{R}\cplx{Hom}(a,\mathcal{O}_X)$. If $a$ is perfect, so is $a^\vee$ and $a\overset\sim\to a^{\vee\vee}$. The following formulas will be used 
\[\aligned  &\mathbb{R}\cplx{\Hom}(a\overset{\mathbb{L}}\otimes c,b)  = \mathbb{R}\cplx{\Hom}(a,b\overset{\mathbb{L}}\otimes c^\vee),\qquad a,b\in\catD(X), c\in\Perf{(X)}
\\  &\mathbb{R}\cplx{\Hom}(a,b) =\mathbb{R}\cplx{\Hom}(b^\vee,a^\vee),\qquad a,b\in\Perf{(X)}
\\  &\mathbb{R}\cplx{\Hom}(a,b)=\mathbb{R}\Gamma(X,b\overset{\mathbb{L}}\otimes a^\vee),\qquad a \in\Perf{(X)}, b\in\catD(X).
\endaligned\]

\subsubsection{Grothendieck Duality} For any morphism of schemes $f\colon X\to T$ the direct image $\mathbb{R}f_*\colon \catD(X)\to\catD(T)$ has a right adjoint (see for example \cite{Lip})
\[ f^!\colon \catD(T)\to\catD(X)\] and then \[\mathbb{R}\cplx{\Hom}(\mathbb{R}f_*(a),t) = \mathbb{R}\cplx{\Hom}( a ,f^!t)\] for any $a\in \catD(X),t\in\catD(T)$. The complex $D_{X/T}:=f^!\mathcal{O}_T$ is called the {\em relative dualizing complex} of $X$ over $T$. If $D_{X/T}$ is just a sheaf (placed in some degree), i.e., 
$ D_{X/T}=\omega_{X/T}[d]$ for some sheaf $\omega_{X/T}$ on $X$ and some integer $d$, then $\omega_{X/T}$ is called {\em relative dualizing sheaf of $X$ over $T$}.

  If $T=\Spec k$ is an affine scheme, then $\catD(T)=D(k)$ is the derived category of complexes of $k$-modules, and $\mathbb{R} f_*=\mathbb{R}\Gamma(X,\quad)$. Then
  \[ \mathbb{R}\cplx{\Hom}(a,b\overset{\mathbb L}\otimes f^!t)= \mathbb{R}\cplx{\Hom}(\mathbb{R}\cplx{\Hom}(b,a),t) \] for any $a\in\catD(X)$, $b\in\Perf{(X)}$, $t\in\catD(k)$.

\subsection{Zero loci of sections of coherent sheaves and base loci}

Let $X$ be a noetherian scheme, let $\mathcal{F}$ be a coherent sheaf on $X$ and $s\in H^0(X, \mathcal{F})$ a section. The {\it zero locus of $s$ }is defined as  the set
$$V(s)=\{x\in X:  s(x)\colon k(x)\to \mathcal{F}\otimes k(x)\text{ is the  zero morphism} \}$$ where $k(x)$ denotes the residue field of the point $x\in X$. When $\mathcal{F}$ is a locally free sheaf, $V(s)$ is  a closed subset of $X$. However, if $\mathcal{F}$ is not a locally free sheaf, this zero locus need not be a closed  subset of $X$, but just a constructible subset of $X$ for the Zariski topology.

\begin{defin}\label{baselocus} Let $X$ be a noetherian scheme and $\mathcal{F}$ a coherent sheaf on $X$. The {\it base locus} of $\mathcal{F}$ is defined as 
$$\text{Bs} |\mathcal{F}|:=\bigcap _{s\in H^0(X,\mathcal{F})} \overline{V(s)}$$
 where $ \overline{V(s)}$ denotes the closure of $V(s)$ in $X$.\end{defin}

$\text{Bs} |\mathcal{F}|$ is then a closed subset of $X$ whose complementary open  set is given by 
$$X- \text{Bs} |\mathcal{F}|=\bigcup_{s\in H^0(X, \mathcal{F})}(\overset{\circ}{D(s)})$$
where  $D(s):=X- V(s)=\{x\in X \text{ such that }s(x)\neq 0\}$ is the complementary subset of $V(s)$ and $\overset{\circ}{D(s)}$ is its interior. 

This can be generalized to an object $\mathcal{F}\in \catD(X)$ in the following way. Let $i\in\mathbb{Z}$ and   $s\in H^i(X,\mathcal{F}):=H^i[\mathbb{R}\Gamma(X,\mathcal{F})]=\Hom_{\catD(X)}(\OO_X,\mathcal{F}[i])$; thus, we think $s$ as a morphism $\OO_X\to \mathcal{F}[i]$ in the derived category. Let $x$ be a point of $X$ and $i_x\colon \Spec k(x)\to X$  the natural morphism. We have then a morphism
\[ s(x):= \mathbb{L}i_x^*(s)\colon k(x)\to \mathbb{L} i_x^*\mathcal{F}[i]\]  and we  define
$$V(s)=\{x\in X:  s(x)  \text{ is the zero morphism}  \}.$$   In other words, $V(s)$ is the set of points $x\in X$ such that the morphism of $k(x)$-vector spaces $k(x)\to H^i( \mathbb{L} i_x^*\mathcal{F})$ is null. 
Now, the base locus of $\mathcal{F}$ is defined as: 
$$\text{Bs} |\mathcal{F}|:= \bigcap_{i\in\mathbb{Z}} \bigcap_{s\in H^i(X,\mathcal{F})} \overline{V(s)}.$$ Then $  \text{Bs} |\mathcal{F}|=\text{Bs} |\mathcal{F}[i]|$ for any $i\in\mathbb{Z}$. 
If $\mathcal{F}$ is just a coherent sheaf, this definition agrees with Definition  \ref{baselocus}. Again, $$X- \text{Bs} |\mathcal{F}|=\bigcup_{i\in\mathbb{Z}}\bigcup_{s\in H^i(X, \mathcal{F})}(\overset{\circ}{D(s)})$$
where  $D(s):=X- V(s)=\{x\in X \text{ such that }s(x)\neq 0\}$ is the complementary subset of $V(s)$ and $\overset{\circ}{D(s)}$ is its interior. 

\begin{defin}\label{relativebaselocus} Let $f\colon X\to S$ be a proper morphism and $f^!\colon \catD(S)\to \catD(X)$ the right adjoint of $\mathbb{R} f_*$. We say that a point $x\in X$ {\em is not a base point of $f$}, if there exists $\mathcal{F}\in\cdbc{S}$ such that $x$ is not a base point of $f^!\mathcal{F}$ (i.e., there exists $s\in H^i(X,f^!\mathcal{F})$ such that  $x\in \overset{\circ}{D(s)}$). On the contrary, we say that $x$ is a base point of $f$ and denote $\text{Bs} |f|$ the set of base points of $f$.
\end{defin}

\begin{eg} (1) Assume that $S=\Spec k$, where $k$ is a field.  Then   it is quite easy to see that the base locus of $f$ is the base locus of the dualizing complex $D_{X/k}$:
\[ \text{Bs} |f|=\text{Bs} |D_{X/k}|.\] Moreover, if $X$ is Cohen-Macaulay, then $D_{X/k}=\omega_{X/k}[n]$ and $\text{Bs} |f|=\text{Bs} |\omega_{X/k}|$.

(2) In definition \ref{relativebaselocus} we could take all $\mathcal F\in\catD^+(S)$ instead of $\mathcal F\in \cdbc{S}$, and then obtain a better (i.e., smaller) base locus $\text{Bs}'|f|$.   This base locus is local on $S$: if $V$ is an open subset and $f_V\colon f^{-1}(V)\to V$ is the restriction of $f$ to $f^{-1}(V)$, then
\begin{equation}\label{localbasepoint} \text{Bs}'|f|\cap f^{-1}(V) = \text{Bs}'|f_V|.\end{equation} Indeed, let us denote $i\colon V\hookrightarrow S$, $j\colon U=f^{-1}(V)\hookrightarrow X$. Then a morphism $s\colon \OO_{U}\to f_V^!\mathcal F$ is the restriction to $U$ of a morphism $s'\colon \OO_X\to f^!\mathbb{R} i_*\mathcal F$, because
\[ \aligned\Hom_{\catD (U)}(\OO_U, f_V^!\mathcal F) &= \Hom_{\catD (X)}(\OO_X,\mathbb{R}j_* f_V^!\mathcal F)\\ &=\Hom_{\catD (X)}(\OO_X, f^!\mathbb{R}i_*  \mathcal F).\endaligned\] This yields equation \eqref{localbasepoint}.

If we assume that $f\colon X\to S$ is of finite homological dimension, then  $\text{Bs}|f|= \text{Bs}'|f|$. This follows from the fact that any morphism $\OO_X\to f^!\mathcal F$, with $F\in\catD^+ (X)$, factors as a composition $\OO_X\to f^!\mathcal F'\to f^!\mathcal F$, for some morphism $\mathcal F'\to\mathcal F$ with $\mathcal F'\in\cdbc{S}$; indeed, the morphism $\mathbb{R}f_*\OO_X\to\mathcal F$ factors through some $\mathcal F'\in\cdbc{S}$ because  $\mathbb{R} f_*\OO_X$ is perfect, since  $f$ is of finite homological dimension.
\end{eg}

\subsection{Orthogonal and semiorthogonal decompositions}
\begin{defin}\label{d:SOD}Let $\mathcal{T}$ be a triangulated category. A {\it semiorthogonal decomposition} (SOD for short) of $\mathcal{T}$,  $\mathcal{T}=\langle \mathcal{A},\mathcal{B}\rangle$, is a pair of full triangulated subcategories $\mathcal{A}$ and $\mathcal{B}$ of $\mathcal{T}$ such that:
\begin{enumerate}\item $\Hom_\mathcal{T}(b, a)=0$ for all $a\in \mathcal{A}$ and $b\in \mathcal{B}$.
\item The smallest full triangulated subcategory of $\mathcal{T}$ generated by $\mathcal{A}$ and $\mathcal{B}$ is $\mathcal{T}$, that is, for every $t\in \mathcal{T}$ there is an exact triangle $$b\to t\to a\to b[1]$$ with $b\in \mathcal{B}$ and $a\in \mathcal{A}$.
\end{enumerate}
\end{defin} 

\begin{rema} The exact triangle in condition (2)  is unique up to a unique isomorphism and if we denote $i\colon \mathcal{A}\hookrightarrow \mathcal{T}$ and $j\colon \mathcal{B}\hookrightarrow \mathcal{T}$ the inclusion functors of the two subcategories, (2) is equivalent to any of the following conditions:
\begin{enumerate} \item[(i)] $\mathcal{B}=^\perp\mathcal{A}$ and the functor $i$ has a left adjoint $G\colon \mathcal{T}\to \mathcal{A}$.
 \item[(ii)] $\mathcal{A}=\mathcal{B}^\perp$ and the functor $j$ has a right adjoint $H\colon \mathcal{T}\to \mathcal{B}$.
\end{enumerate}

\end{rema}

\begin{rema} If moreover in Definition \ref{d:SOD} we require that $\Hom_\mathcal{T}(a, b)=0$ for all $a\in \mathcal{A}$ and $b\in \mathcal{B}$, the decomposition is called an {\it orthogonal decomposition}. In this case, the  triangle in (2) splits and we obtain $t\simeq a\oplus b$.

\end{rema}

Since $\cdbc{X}$ has no non-trivial orthogonal decomposition for any connected scheme $X$ (see Proposition \ref{p:conexo} for details), one gives the following 

\begin{defin}
We will say that a triangulated category $\mathcal{T}$ is {\it indecomposable} if any  SOD is trivial (that is, if  $\mathcal{T}=\langle \mathcal{A},\mathcal{B}\rangle$ is a SOD, then one of the two components $\mathcal{A}$ or $\mathcal{B}$ is equal to zero). 

A subcategory $\mathcal{A}\subset \mathcal{T}$ is said to be  {\it left admissible} (resp. right admissible) if the inclusion functor $i\colon \mathcal{A}\hookrightarrow \mathcal{T}$ admits a left adjoint (resp. right adjoint).    $\mathcal{A}\subset \mathcal{T}$ is {\it admissible} if it is both left and right admissible. 

A SOD $\mathcal{T}=\langle \mathcal{A},\mathcal{B}\rangle$ is called {\it admissible} if both subcategories $\mathcal{A}$ and $\mathcal{B}$ are admissible.

\end{defin}

Notice that if $X$ is a smooth and proper variety,  any SOD of $\cdbc{X}$ is admissible \cite{BK89}. This is not longer true if $X$ is a singular scheme. In this case, $\Perf({X})$ is an example of a non-admissible subcategory of $\cdbc{X}$. 
Remember that we say that a scheme $X$ satisfies the (ELF) condition, if it is separated, noetherian of finite Krull dimension and its category of coherent sheaves has enough locally free
sheaves. This is satisfied for instance, for any quasi-projective scheme. In \cite{Or06}, Orlov proves the  following interesting result.

\begin{lem}\label{l: SODinducidas} Let $X$ be a scheme satisfying (ELF) condition and let $\cdbc{X}=\langle \mathcal{A},\mathcal{B}\rangle$ be an admissible SOD. Then  $$\Perf(X)=\langle \mathcal{A}\cap \Perf(X),\mathcal{B}\cap \Perf(X)\rangle$$
is a SOD  of $\Perf(X)$.
\end{lem}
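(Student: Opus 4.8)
This is due to Orlov \cite{Or06}; here is the idea. The plan is to reduce the statement to a single perfect complex and then to apply an intrinsic characterisation of perfect objects inside $\cdbc{X}$ that is compatible with the projection functors of the decomposition. First, $\mathcal{A}\cap\Perf(X)$ and $\mathcal{B}\cap\Perf(X)$ are full triangulated subcategories of $\Perf(X)$, and $\Hom_{\cdbc{X}}(b,a)=0$ for $a\in\mathcal{A}\cap\Perf(X)$ and $b\in\mathcal{B}\cap\Perf(X)$, since this already holds in $\cdbc{X}$. Hence, by Definition \ref{d:SOD}, it suffices to prove that for every $P\in\Perf(X)$ the decomposition triangle $b\to P\to a\to b[1]$ of $P$ relative to $\cdbc{X}=\langle\mathcal{A},\mathcal{B}\rangle$ has $a,b\in\Perf(X)$; and since $\Perf(X)$ is a thick subcategory of $\cdbc{X}$ (closed under shifts, cones and direct summands), it is enough to show $a\in\Perf(X)$, the other one then following from the triangle.

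The crucial step is the following description: an object $Q\in\cdbc{X}$ is perfect if and only if, for every $M\in\cdbc{X}$, one has $\Hom_{\cdbc{X}}(Q,M[n])=0$ for $n\gg0$. For ``only if'', if $Q$ is perfect then $\mathbb{R}\SHom(Q,M)\in\cdbc{X}$ (locally $Q$ is a bounded complex of finite free modules), and $\mathbb{R}\Gamma(X,-)$ has finite cohomological dimension, so that $\Hom_{\cdbc{X}}(Q,M[n])=H^{n}\big(X,\mathbb{R}\SHom(Q,M)\big)$ vanishes for $n$ large. For the converse, if $Q$ is not perfect then the locus where it fails to be perfect is a nonempty closed subset of $X$, hence contains a closed point $x$ at which $Q_{x}$ has infinite projective dimension over $\mathcal{O}_{X,x}$; taking for $M$ the skyscraper sheaf $k(x)$ at $x$ and computing $\mathbb{R}\SHom(Q,k(x))$ on stalks, one obtains $\Hom_{\cdbc{X}}(Q,k(x)[n])=\Ext^{n}_{\mathcal{O}_{X,x}}(Q_{x},k(x))$, which is nonzero for all large $n$ because the minimal free resolution of $Q_{x}$ is unbounded.

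To conclude, apply this to $a$. Let $i\colon\mathcal{A}\hookrightarrow\cdbc{X}$ be the inclusion; by the remark after Definition \ref{d:SOD} it has a left adjoint $G$, and by uniqueness of the decomposition triangle $a=G(P)$. Since the SOD is admissible, $\mathcal{A}$ is also right admissible, so $i$ moreover admits a right adjoint $i^{!}\colon\cdbc{X}\to\mathcal{A}$. Then, for every $M\in\cdbc{X}$,
\[
\begin{aligned}
\Hom_{\cdbc{X}}(a,M[n])
&=\Hom_{\mathcal{A}}\big(a,(i^{!}M)[n]\big)
=\Hom_{\mathcal{A}}\big(G(P),(i^{!}M)[n]\big)\\
&=\Hom_{\cdbc{X}}\big(P,(i\,i^{!}M)[n]\big),
\end{aligned}
\]
by successively using the adjunction between $i$ and $i^{!}$, the identity $a=G(P)$ with $G$ left adjoint to $i$, and the adjunction between $G$ and $i$; since $i\,i^{!}M\in\cdbc{X}$ and $P$ is perfect, this group vanishes for $n\gg0$. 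By the characterisation above, $a=G(P)\in\Perf(X)$, hence $b\in\Perf(X)$, and $b\to P\to a\to b[1]$ is the desired triangle. I expect the only genuine difficulty to be the converse half of the characterisation of perfectness, which rests on the unboundedness of minimal free resolutions of non-perfect complexes over noetherian local rings together with the openness of the perfect locus; granting that, the admissibility hypothesis furnishes precisely the extra adjoint $i^{!}$ that makes the adjunction computation go through.
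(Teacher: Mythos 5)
The paper does not actually prove this lemma---it is quoted from Orlov \cite{Or06}---and your sketch correctly reconstructs Orlov's argument: semiorthogonality and the decomposition triangle are inherited automatically, and the projections of a perfect complex are again perfect because perfectness in $\cdbc{X}$ is equivalent to homological finiteness, a property which the extra right adjoint $i^!$ supplied by admissibility lets you transfer from $P$ to $a=G(P)$ exactly as you compute. The only caveat is that the ``perfect $\Rightarrow$ homologically finite'' half of your characterisation uses finite cohomological dimension of $\mathbb{R}\Gamma$, so one should assume $X$ finite-dimensional (automatic in all of the paper's applications, and implicit in the quasi-projectivity hypothesis of Orlov's Proposition 1.11).
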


This means that in the geometric context the most interesting SOD's are  admissible SOD's so that the following notion was introduced in \cite{Spen22}.

\begin{defin} We will say that a triangulated category $\mathcal{T}$ is {\it weakly indecomposable} if any admissible SOD is trivial.
\end{defin}

\subsection{Detecting the support of a complex}
Let $X$ be a scheme. In this subsection, we provide a class of objects in $\Perf{(X)}$ that are a spanning class for both $\Perf{(X)}$ and $\cdbc{X}$ and that are able to detect the support of any object $a\in \cdbc{X}$.

Let us start by remembering the definition of spanning class. 
\begin{defin} If $\mathcal{T}$ is a triangulated category, a class of objects $\Omega\subset \mathcal{T}$ is a {\it spanning class} for $\mathcal{T}$ if the following conditions are satisfied:
\begin{enumerate} \item If  $ \Hom^i(a,\omega)= 0$ for every $\omega\in \Omega$, $i\in\mathbb Z$, then $a=0$.
\item If  $\Hom^i(\omega,a)= 0$ for every $\omega\in \Omega$, $i\in\mathbb Z$, then $a=0$.
\end{enumerate}
\end{defin}

\begin{defin} For any $a\in \catD(X)$, the support of $a$ is defined by $$\supp(a):=\{ x\in X: a_x\neq 0\} =\underset{i\in\mathbb Z}\bigcup \supp(H^i(a)).$$  If $a\in \cdbc{X}$, then $\supp(a)$ is a closed subset.
\end{defin}

\subsubsection{Koszul complexes} Let $\OO$   be a noetherian local ring of dimension $n$ and maximal ideal $\frak m$. Let $x$ be the closed
point.

\begin{defin} A sequence ${\bf f} = \{ f_1, \dots , f_n\}$ of $n$ elements in $\frak m$ is called a system of
parameters of $\OO$, if $\OO/(f_1, \dots , f_n)$ is a zero-dimensional ring. In other words, $(f_1, \dots , f_n)$ is an $\frak m$-primary ideal. We shall also denote $\OO/{\bf f} = \OO/(f_1, \dots , f_n)$.
\end{defin}

It is a basic fact of dimension theory that there always exists a system of parameters. In
fact, for any $\frak m$-primary ideal $I$, there exist $f_1, \dots , f_n$ in $I$ which are a system of parameters
of $\OO$.

\begin{defin} We shall denote by $\Kos ({\bf f} )$ the Koszul complex associated to a system of parameters ${\bf f}$. That is, if we define $L = \OO^{\oplus n}$ and $\omega \colon L\to\OO$ the morphism given by $f_1, \dots , f_n$, then the Koszul complex is $\bigwedge^i_{\OO}L$   in degree $-i$ and the differential $\bigwedge^i_{\OO}L\to \bigwedge^{i-1}_{\OO}L$   is the inner contraction with $\omega$.

 It is immediate to see that $\Hom^\pun(\Kos ({\bf f} ),\OO) \simeq \Kos ({\bf f} )[-n]$. The cohomology modules $H^i(\Kos ({\bf f} ))$ are supported at $x$ (indeed, they are annihilated by $(f_1, \dots , f_n)$). Moreover, $H^0(\Kos ({\bf f} )) = \OO/{\bf f}$.  

Now let $X$ be a scheme and $x\in X$ a closed point.  Let ${\bf f}_x$ be a system of parameters of $\OO_{X,x}$.  
Let $U=\Spec A$ be an affine open neightbourhood of $x$, and let us consider the natural morphisms $$\Spec(\OO_{X,x})\overset\phi\to  U\overset j\hookrightarrow X.$$  We   shall still denote by $\Kos ({\bf f}_x)$ the complex $j_!\phi_*\Kos ({\bf f}_x)$, where $j_!$ is the extension by zero (out of $U$) functor. Notice that even if in general $j_!$ doesn't preserve quasicoherence, since  $\Kos ({\bf f}_x)$  is supported at a point, in this case, $\Kos ({\bf f}_x)$ is quasicoherent and in fact it belongs to $\Perf(X)$, because it is perfect when restricting to $U$ and zero when restricting to $X-\{x\}$.
\end{defin}

For each closed point $x\in X$, we choose a system of parameters ${\bf f}_x$ of $\OO_{X,x}$. Lemma 1.9 in \cite{S09} proves that the set 
$$\Omega=\{\Kos({\bf f}_x) \text{  for all closed points } x\in X\}$$  is a spanning class for $\cdbc{X}$ and for $\Perf{(X)}$. We shall see in Lemma \ref{l:soporte} that they also detect the support of any object $a\in\cdbc{X}$. This is very close to the technique
of ``perfect spanning classes''  introduced in Section 2.2 in \cite{Kuz06}.

\begin{lem} \label{l:soplocal} Let $X$ be a scheme and $0\neq \mathcal{F}\in \cdbc{X}$. Assume that a connected component of $\supp(\mathcal{F})$ is contained in an affine open subset of $X$. Then $\mathbb{R}\Gamma(X,\mathcal{F})\neq 0$. 
\end{lem}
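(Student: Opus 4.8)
The statement is local in nature: we want to show that if a connected component $Z$ of $\supp(\mathcal{F})$ sits inside an affine open $U = \Spec A$, then $\mathbb{R}\Gamma(X,\mathcal{F}) \neq 0$. The plan is to reduce to the affine case and then use that on an affine scheme global sections is essentially exact (computes cohomology correctly on quasi-coherent sheaves) to conclude that $\mathbb{R}\Gamma(U,\mathcal{F}|_U)$ is nonzero; then I must argue that the contribution from $Z$ survives in $\mathbb{R}\Gamma(X,\mathcal{F})$ and is not cancelled by the rest of $X$.

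First I would split $\supp(\mathcal{F})$ as a disjoint union of its connected components. Since $\mathcal{F} \in \cdbc{X}$, $\supp(\mathcal{F})$ is closed, so $Z$ is both open and closed in $\supp(\mathcal{F})$; hence $Z$ is closed in $X$, and $Z' := \supp(\mathcal{F}) \setminus Z$ is also closed in $X$ and disjoint from $Z$. Replacing $X$ by $X \setminus Z'$ (an open subset containing $Z$, and $\mathbb{R}\Gamma$ of a complex supported on $Z$ is computed on any open set containing $Z$ — more precisely, $\mathcal{F}$ restricted to $X \setminus Z'$ still has support exactly $Z$, and the natural map does not obviously see all of $\mathbb{R}\Gamma(X,\mathcal{F})$, so here I need to be careful). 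Actually the cleaner route: since $Z$ and $Z'$ are disjoint closed subsets with $Z \subset U$ affine, I can cover $X$ by $U$ and $V := X \setminus Z$, and note $\supp(\mathcal{F}) \subset U \cup V$ trivially, but $\mathcal{F}|_V$ has support $Z'$. Then use the Mayer–Vietoris / local cohomology decomposition: there is a distinguished triangle $\mathbb{R}\Gamma_Z(X,\mathcal{F}) \to \mathbb{R}\Gamma(X,\mathcal{F}) \to \mathbb{R}\Gamma(X\setminus Z, \mathcal{F}) \to$, and it suffices to show $\mathbb{R}\Gamma_Z(X,\mathcal{F}) \neq 0$, since its cohomology is supported on $Z$ while $\mathbb{R}\Gamma(X\setminus Z,\mathcal{F})$ involves only $Z'$; more carefully I would show the composite $\mathbb{R}\Gamma_Z(X,\mathcal{F}) \to \mathbb{R}\Gamma(X,\mathcal{F})$ is a split injection on a suitable piece, or directly that $H^i_Z(X,\mathcal{F}) \to H^i(X,\mathcal{F})$ is injective for the relevant $i$ because the obstruction lives in $H^{i}(X\setminus Z,\mathcal{F})$ which only depends on a neighborhood of $Z'$.

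The heart of the argument is then: $\mathbb{R}\Gamma_Z(X,\mathcal{F}) \neq 0$. Since $Z \subset U$ is closed and excision holds for local cohomology with support in a closed set, $\mathbb{R}\Gamma_Z(X,\mathcal{F}) \simeq \mathbb{R}\Gamma_Z(U,\mathcal{F}|_U)$. On the affine scheme $U = \Spec A$, write $\mathcal{F}|_U$ as (quasi-isomorphic to) a complex of $A$-modules $M^\bullet$ with bounded finitely generated cohomology and $\supp M^\bullet = Z$ (as a subset of $\Spec A$). If $\mathbb{R}\Gamma_Z(U,\mathcal{F}|_U) = 0$, I derive a contradiction: the highest nonvanishing cohomology module $H^m(M^\bullet)$ is a nonzero finitely generated $A$-module with $\supp H^m(M^\bullet) \subseteq Z$; but $\mathbb{R}\Gamma_Z$ of a module supported set-theoretically on $Z$ recovers the module in degree $0$ up to its $\Gamma_Z$-torsion, and a module supported on $Z$ (in the closed-subscheme sense one reduces to, since set-theoretic support on $Z$ means it's a module over $A/I$ for some ideal $I$ with $V(I) = Z$) is $I$-torsion, so $\Gamma_Z(H^m(M^\bullet)) = H^m(M^\bullet) \neq 0$. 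Running the spectral sequence (or truncation triangle) $H^p_Z(H^q(M^\bullet)) \Rightarrow H^{p+q}_Z(M^\bullet)$, the top corner $H^m_Z(H^0\text{-part})$... — I should instead induct on the amplitude of $M^\bullet$ via the truncation triangle $\tau_{\leq m-1} M^\bullet \to M^\bullet \to H^m(M^\bullet)[-m] \to$, using that $\mathbb{R}\Gamma_Z$ applied to the nonzero torsion module $H^m(M^\bullet)$ is nonzero in degree $0$ and a connecting-map analysis shows it cannot be killed. This gives $\mathbb{R}\Gamma_Z(U, \mathcal{F}|_U) \neq 0$, hence $\mathbb{R}\Gamma(X,\mathcal{F}) \neq 0$.

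The main obstacle I anticipate is the last bookkeeping step: controlling how the contributions of $Z$ and $Z'$ interact inside $\mathbb{R}\Gamma(X,\mathcal{F})$, i.e., making sure the nonvanishing of the local-cohomology piece at $Z$ genuinely forces $\mathbb{R}\Gamma(X,\mathcal{F}) \neq 0$ and isn't cancelled in the Mayer–Vietoris triangle. The clean fix is to work one connected component at a time and observe that, after restricting to the open set $X \setminus Z'$, the sheaf $\mathcal{F}$ has support contained in a single affine open (namely $U \cap (X\setminus Z')$, which contains $Z$), so one is genuinely reduced to: $X$ affine, $\mathcal{F} \in \cdbc{X}$ nonzero $\Rightarrow$ $\mathbb{R}\Gamma(X,\mathcal{F}) \neq 0$ — and on an affine scheme this is immediate because $\mathbb{R}\Gamma(\Spec A, -)$ on quasi-coherent complexes is just "take global sections of a complex of modules," which detects the highest nonzero cohomology module. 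I would phrase the proof around this reduction rather than around local cohomology, as it is shorter and avoids the spectral-sequence subtleties, using only that passing to the open subset $X\setminus Z'$ does not change $\mathbb{R}\Gamma$ of the part of $\mathcal{F}$ supported on $Z$ — which I would justify by the triangle relating $\mathcal{F}$, its restriction to $X\setminus Z'$, and the piece with support in $Z'$.
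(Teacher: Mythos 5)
Your overall strategy --- cover $X$ by $U$ and $V=X\setminus Z$, split the Mayer--Vietoris triangle, and finish with the affine case (which you handle correctly) --- is exactly the paper's, but the one point you yourself flag as the ``main obstacle'' is not closed by your proposed fix, and it is the only nontrivial point of the proof. For the Mayer--Vietoris triangle to degenerate into a direct sum you need $\mathcal{F}|_{U\cap V}=0$, i.e.\ you need the affine open $U$ to be disjoint from the union $Z'$ of the remaining components; nothing in the hypothesis guarantees this, since the given $U$ may well meet $Z'$. The paper's proof spends its first half supplying precisely this separation: writing $U=\Spec A$, $Z=V(I)$, $U\cap Z'=V(J)$, disjointness gives $I+J=A$, so $i+j=1$ with $i\in I$, $j\in J$, and the \emph{principal} affine open $U_j=U\setminus V(j)$ contains $Z$ and misses $Z'$. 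Your substitute --- first restrict to $X\setminus Z'$ --- does not repair the argument: the set $U\cap(X\setminus Z')$ that you call ``a single affine open'' is merely an open subset of an affine scheme and need not be affine, so you are not reduced to the affine case; and passing from $\mathbb{R}\Gamma(X\setminus Z',\mathcal{F})\neq 0$ back to $\mathbb{R}\Gamma(X,\mathcal{F})\neq 0$ is exactly the cancellation problem you raise. In the triangle $\mathbb{R}\Gamma_{Z'}(X,\mathcal{F})\to\mathbb{R}\Gamma(X,\mathcal{F})\to\mathbb{R}\Gamma(X\setminus Z',\mathcal{F})\to$ the connecting map could a priori be an isomorphism when the middle term vanishes; saying that one term is ``supported on $Z$'' and the other ``involves only $Z'$'' proves nothing, because these are complexes of global sections, for which support is not defined. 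The same objection applies to your local-cohomology sketch, where ``a connecting-map analysis shows it cannot be killed'' is left unargued.

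Both of your routes can be completed. The shortest repair is the paper's shrinking trick above. Alternatively, your sheaf-level idea works once made precise: in the triangle of \emph{sheaves} relating $\mathcal{F}$, its sections with support in $Z'$, and the derived pushforward of $\mathcal{F}|_{X\setminus Z'}$, the two outer terms are supported on the disjoint closed sets $Z'$ and $Z$ respectively, so the connecting morphism between them vanishes and $\mathcal{F}$ splits as a direct sum of a piece supported on $Z$ and a piece supported on $Z'$; applying $\mathbb{R}\Gamma(X,-)$ to the summand supported on $Z$, whose support now genuinely avoids $U\cap V$, your Mayer--Vietoris argument goes through. Either fix requires an explicit step that the proposal, as written, only gestures at.
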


\begin{proof} Let $C$ be a connected component of $\supp(\mathcal{F})$ which is contained in an affine open subset $U$. Let us put $\supp(\mathcal{F})=C\sqcup C'$, where $C'$ is the union of the remaining connected components. We may assume, shrinking $U$, that $U\cap C'=\emptyset$. Indeed, let us put $U=\Spec A$, $C=(I)_0$ and $U\cap C'=(J)_0$. Since $C\cap C'=\emptyset$, one has $I+J=A$, so there exist $i\in I,j\in J$ such that $i+j=1$. Then $C$ is contained in the affine open subset $U_j:=U-(j)_0$ and $U_j\cap C'=\emptyset$. 

Then we have $X=U\cup V$, with $V=X-C$, and $\mathcal{F}|_{U\cap V} =0$. From the exact triangle
\[ \mathbb{R}\Gamma(X,\mathcal{F})\to \mathbb{R}\Gamma(U,\mathcal{F})\oplus \mathbb{R}\Gamma(V,\mathcal{F})\to \mathbb{R}\Gamma(U\cap V,\mathcal{F}) \to \mathbb{R}\Gamma(X,\mathcal{F})[1]\]
we conclude that $\mathbb{R}\Gamma(X,\mathcal{F})\simeq \mathbb{R}\Gamma(U,\mathcal{F})\oplus \mathbb{R}\Gamma(V,\mathcal{F})$, so  it suffices to prove that $\mathbb{R}\Gamma(U,\mathcal{F})\neq 0$. But this follows because $U$ is affine and $\mathcal{F}|_U\neq 0$.
\end{proof}

 The following lemma, whose proof is implicit in \cite{S09},  shows that the objects in $\Omega$ not only form a spanning class but they are also able to detect the support of any object $a\in \cdbc{X}$.

\begin{lem} \label{l:soporte} Let $X$ be a scheme, $x\in X$ a closed point and ${\bf f}_x$ a system of parameters of $\OO_{X,x}$.  For any $a\in \cdbc{X}$, the following statements are equivalent:
\begin{enumerate} 
\item $x\in \supp(a)$.
\item $a\overset{\mathbb{L}}\otimes \Kos({\bf f}_x)\neq 0$.
\item $\mathbb{R}\cplx{\Hom}( \Kos({\bf f}_x),a)\neq 0$.
\item $\mathbb{R}\cplx{\Hom}(a,\Kos({\bf f}_x))\neq 0$.
\end{enumerate}
\end{lem}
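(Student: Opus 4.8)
The plan is to prove the chain of equivalences by working locally around the closed point $x$, where the Koszul complex becomes a genuine Koszul complex of a system of parameters of the local ring $\OO_{X,x}$, and by exploiting the self-duality $\Kos({\bf f}_x)^\vee\simeq\Kos({\bf f}_x)[n]$ together with Lemma \ref{l:soplocal}. First I would observe that since $\Kos({\bf f}_x)=j_!\phi_*\Kos({\bf f}_x)$ is supported at the single closed point $x$ and is perfect, all four conditions are insensitive to the global geometry of $X$: replacing $X$ by an affine open neighbourhood $U=\Spec A$ of $x$ and $a$ by $a|_U$ changes none of (1)--(4), because $a\lotimes\Kos({\bf f}_x)$, $\mathbb{R}\cplx{\Hom}(\Kos({\bf f}_x),a)$ and $\mathbb{R}\cplx{\Hom}(a,\Kos({\bf f}_x))$ are all supported at $x$ and hence their global sections over $X$ agree with their global sections over $U$ (this is where Lemma \ref{l:soplocal} enters: a nonzero object of $\cdbc{X}$ supported at the closed point $x$, which lies in the affine open $U$, has nonzero $\mathbb{R}\Gamma$). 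So from now on I may assume $X=\Spec A$ is affine, local or not, and that we are asking whether $x$ lies in $\supp(a)$.

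\textbf{The equivalences.} For (1)$\Leftrightarrow$(2): after localizing at $x$ we must show that for a bounded complex $M^\bullet$ of finitely generated $\OO_{X,x}$-modules, $M^\bullet\lotimes\Kos({\bf f}_x)\neq 0$ if and only if $M^\bullet\neq 0$. The ``only if'' is trivial; for the ``if'' direction, note that $\Kos({\bf f}_x)$ has $H^0=\OO/{\bf f}$, so by taking the bottom nonzero cohomology of $M^\bullet$ and using that $\OO/{\bf f}$ is a nonzero module whose support is $\{x\}$, a standard spectral sequence / Nakayama argument shows the top (or bottom) Tor does not vanish — concretely, if $r$ is the largest index with $H^r(M^\bullet)\neq 0$, then $H^r(M^\bullet)\otimes\OO/{\bf f}\neq 0$ by Nakayama (the parameters lie in $\frak m$), and this survives to give a nonzero cohomology of $M^\bullet\lotimes\Kos({\bf f}_x)$ in the appropriate degree since Tor contributions from higher degrees cannot interfere at the extreme spot. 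For (2)$\Leftrightarrow$(4): using the formula $\mathbb{R}\cplx{\Hom}(a,b)=\mathbb{R}\Gamma(X,b\lotimes a^\vee)$ with $a=\Kos({\bf f}_x)$ perfect, we get $\mathbb{R}\cplx{\Hom}(\Kos({\bf f}_x),a)=\mathbb{R}\Gamma(X,a\lotimes\Kos({\bf f}_x)^\vee)$; but $\Kos({\bf f}_x)^\vee\simeq\Kos({\bf f}_x)[n]$, so this is $\mathbb{R}\Gamma(X,a\lotimes\Kos({\bf f}_x))[n]$, and since $a\lotimes\Kos({\bf f}_x)$ is supported at the closed point $x\in U$ affine, Lemma \ref{l:soplocal} gives that it is nonzero if and only if its $\mathbb{R}\Gamma$ is nonzero — that is, (2)$\Leftrightarrow$(3). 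Similarly, $\mathbb{R}\cplx{\Hom}(a,\Kos({\bf f}_x))=\mathbb{R}\Gamma(X,\Kos({\bf f}_x)\lotimes a^\vee)$; here $a$ need not be perfect, so instead I would use $\mathbb{R}\cplx{\Hom}(a,\Kos({\bf f}_x))\simeq\mathbb{R}\cplx{\Hom}(a,\Kos({\bf f}_x)^\vee{}^\vee)$ and the adjunction-free formula $\mathbb{R}\cplx{Hom}(a,\Kos({\bf f}_x))\simeq a^\vee$-free manipulation, or more cleanly: $\SHom(a,\Kos({\bf f}_x))$ is supported at $x$ and by local duality / the self-duality of $\Kos$ one reduces (4) to the statement that $\mathbb{R}\cplx{\Hom}_{\OO_{X,x}}(M^\bullet,\Kos({\bf f}_x))\neq 0$ iff $M^\bullet\neq 0$, which is again a bottom/top-cohomology Nakayama argument dual to the one for (2).

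\textbf{Main obstacle.} The part requiring the most care is (1)$\Rightarrow$(4): since $a$ is only in $\cdbc{X}$ and not perfect, I cannot symmetrize via double-dual as freely, and the contravariant functor $\mathbb{R}\cplx{\Hom}(-,\Kos({\bf f}_x))$ does not obviously detect nonvanishing — one must rule out the possibility that a nonzero complex becomes acyclic after $\mathbb{R}\cplx{\Hom}(-,\Kos({\bf f}_x))$. The right tool is the local structure: over $\OO_{X,x}$, write $K=\Kos({\bf f}_x)$; then $\mathbb{R}\cplx{\Hom}(M^\bullet,K)$ has a convergent spectral sequence with $E_2^{p,q}=\Ext^p(H^{-q}(M^\bullet),H^0(K)\text{-type terms})$, and picking the cohomological extreme of $M^\bullet$ together with the fact that $\Hom_{\OO_{X,x}}(N,\OO/{\bf f})\neq 0$ for any nonzero finitely generated $N$ supported at $x$ (because $\OO/{\bf f}$ is an Artinian local ring with the same residue field, so it is a faithful-cogenerator-type module — more precisely $\Hom(N,\OO/{\bf f})\neq0$ since $N$ surjects onto $N/\frak mN$ which embeds... no: rather $N$ has a submodule $\cong k(x)=\OO/\frak m$, and $\Hom(k(x),\OO/{\bf f})\neq 0$) forces a nonzero entry that survives to the abutment. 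I expect assembling this spectral-sequence extremal-term argument cleanly, uniformly for (2),(3),(4), to be the real work; everything else is formal manipulation with the duality formulas collected in Section 1 and one application of Lemma \ref{l:soplocal}.
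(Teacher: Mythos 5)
Your route coincides with the paper's: (1)$\Leftrightarrow$(2) by a support/Nakayama argument, (2)$\Leftrightarrow$(3) via the self-duality $\Kos({\bf f}_x)^\vee\simeq\Kos({\bf f}_x)[n]$ combined with Lemma \ref{l:soplocal}, (1)$\Rightarrow$(4) by the extremal (corner) term of the Hom spectral sequence (the paper writes it as $0\neq\Hom(H^{q_0}(a),H^{p_0}(K))=\Hom^{p_0-q_0}(a,K)$), and the remaining implication back to (2) by dualizing $K$. The plan is therefore sound, but the justification you give for the nonvanishing of the extremal term — the step you yourself single out as the real work — is wrong as written, in two respects.

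First, the source module. With $q_0$ the largest $q$ such that $x\in\supp H^q(a)$, the module $N=H^{q_0}(a)_x$ is a nonzero finitely generated $\OO_{X,x}$-module, but it is \emph{not} in general of finite length (take $a=\OO_X$), so it need not contain a copy of $k(x)$ as a submodule; your parenthetical ``rather $N$ has a submodule $\cong k(x)$'' is exactly backwards, and in any case a nonzero map defined on a submodule does not extend to $N$ unless the target is injective. The correct argument is the one you started and then crossed out: by Nakayama $N$ \emph{surjects} onto $k(x)$, while $k(x)$ \emph{embeds} into the target because a nonzero finite-length module has nonzero socle; the composite $N\twoheadrightarrow k(x)\hookrightarrow(\text{target})$ is a nonzero element of the relevant $\Hom$. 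Second, the target module. For maps \emph{into} $K$ the surviving corner pairs the top cohomology of the source with the \emph{lowest} nonvanishing cohomology $H^{p_0}(K)$ of the target, not with $H^0(K)=\OO/{\bf f}$; since ${\bf f}$ is only a system of parameters and need not be a regular sequence, $p_0$ can be negative and $H^{p_0}(K)$ is then a lower Koszul homology module. It is still nonzero and annihilated by $({\bf f})$, hence of finite length, so the corrected surjection-then-injection argument applies to it verbatim. With these two repairs your proof matches the paper's.
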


\begin{proof} Let us denote $K= \Kos({\bf f}_x)$ and $n=\dim\OO_{X,x}$.

$(1)\Leftrightarrow (2)$. For any $a,b\in  \cdbc{X}$ one has $\supp(a\overset{\mathbb{L}}\otimes b)=\supp (a)\cap\supp(b)$. One concludes because  $\supp(K)=\{ x\}$.

$(2)\Leftrightarrow (3)$.  This follows from the isomorphisms
\[ \mathbb{R}\cplx{\Hom}( K,a)\simeq  \mathbb{R}\Gamma(X, a \overset{\mathbb{L}}\otimes K^\vee)\simeq  \mathbb{R}\Gamma(X, a \overset{\mathbb{L}}\otimes K )[-n]\] and Lemma \ref{l:soplocal}.

$(1)\Rightarrow (4)$. If $q_0$ is the maximum $q$ such that $x\in\supp(H^q(a))$, and $p_0$ is the minimum $p$ such that $H^p(K)\neq 0$, then  
\[ 0\neq \Hom(H^{q_0}(a ),H^{p_0}(K))=\Hom^{p_0-q_0} (a ,K).\]

$(4)\Rightarrow (2)$. This follows from the isomorphisms $\mathbb{R}\cplx{\Hom}(a,K)\simeq \mathbb{R}\cplx{\Hom}(a\overset{\mathbb{L}}\otimes K^\vee,\OO_X)\simeq  \mathbb{R}\cplx{\Hom}(a\overset{\mathbb{L}}\otimes K,\OO_X)[n]$.

\end{proof}

\begin{rem} If $a,b$ are two perfect complexes on $X$, then $\mathbb{R}\cplx{\Hom}(a,b)=\mathbb{R}\cplx{\Hom}(b^\vee,a^\vee)$. In particular, $$\mathbb{R}\cplx{\Hom}(a, \Kos({\bf f}_x))\simeq \mathbb{R}\cplx{\Hom}( \Kos({\bf f}_x)[-n],a^\vee) $$  and then $\supp(a)=\supp(a^\vee)$.
\end{rem}

\section{A criterion of indecomposability}
In this section, we characterize the connectivity of a  scheme by means of both categories $\cdbc{X}$ and $\Perf({X})$ and we give some criteria of indecomposability and weakly indecomposability.

 \begin{prop} \label{p:conexo} Let $X$ be a scheme. Then $X$ is connected if and only if $\cdbc{X}$ and $\Perf{(X})$ admit no non-trivial orthogonal decomposition.
\end{prop}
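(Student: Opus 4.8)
The plan is to prove both implications. For the easy direction, suppose $X = X_1 \sqcup X_2$ is a disconnection into nonempty open-and-closed subsets. Then restriction gives equivalences $\Perf(X) \simeq \Perf(X_1) \times \Perf(X_2)$ and $\cdbc{X} \simeq \cdbc{X_1} \times \cdbc{X_2}$, since a complex on $X$ is just a pair of complexes, one on each piece, with no interaction. Setting $\mathcal{A}$ to be the complexes supported on $X_1$ and $\mathcal{B}$ those supported on $X_2$ (in either category), conditions (1) and (2) of Definition \ref{d:SOD} hold in both orders because $\Hom(a,b)$ and $\Hom(b,a)$ vanish for $a$ supported on $X_1$ and $b$ on $X_2$ (the supports are disjoint closed sets, and one may compute $\mathbb{R}\cplx{\Hom}$ locally). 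Both $\mathcal{A}$ and $\mathcal{B}$ are nonzero, e.g. $\OO_{X_1}$ (extended by zero) and $\OO_{X_2}$, so we get a non-trivial orthogonal decomposition of each category.

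For the converse, suppose $X$ is connected and assume $\cdbc{X} = \langle \mathcal{A}, \mathcal{B}\rangle$ (or $\Perf(X) = \langle \mathcal{A},\mathcal{B}\rangle$) is an orthogonal decomposition; I want to show one factor is zero. The key observation is that an orthogonal decomposition splits the identity: $\OO_X \simeq a \oplus b$ with $a \in \mathcal{A}$, $b \in \mathcal{B}$, and since $\End(\OO_X) = H^0(X,\OO_X)$ and (in the derived sense) $\Hom^i(\OO_X,\OO_X)$ the decomposition $\OO_X = a \oplus b$ corresponds to an idempotent $e \in \Hom(\OO_X,\OO_X) = H^0(X,\OO_X)$, in fact an idempotent in the ring $H^0(X,\OO_X)$ in degree $0$ (one checks $a,b$ have cohomology only in degree $0$ by using orthogonality: $\Hom^{<0}$ and $\Hom^{>0}$ between $a$ and $b$ vanish in both directions, which forces $a,b$ to be honest idempotent sheaf summands). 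Such an idempotent $e$ determines a decomposition $X = X_1 \sqcup X_2$ into open-closed subschemes with $a = \OO_{X_1}$, $b = \OO_{X_2}$. Then orthogonality gives $\Hom(\OO_{X_i}, c) = 0$ for every $c$ in the other component; but every object $c$ supported meeting $X_i$ admits a nonzero map from or to $\Kos(\mathbf{f}_x)$ for a closed point $x \in X_i$ (Lemma \ref{l:soporte}), and using that $\Kos(\mathbf{f}_x)$ lies in one of the two components forces $\supp(c) \cap X_i = \emptyset$ for all $c$ in the other component. Hence one component consists only of complexes supported on, say, $X_2$ and the other only on $X_1$; since $X$ is connected, one of $X_1, X_2$ is empty, and the corresponding component is $0$.

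I expect the main obstacle to be the step showing that an orthogonal (not merely semiorthogonal) decomposition of the derived category is induced by a scheme-theoretic decomposition of $X$ — i.e. pinning down that the idempotent splitting $\OO_X = a\oplus b$ forces $a$, $b$ to be (shifts of) structure sheaves of clopen subschemes rather than more exotic complexes. The cleanest route is to argue via supports: from orthogonality in \emph{both} directions one shows $\supp(a)$ and $\supp(b)$ are disjoint (if $x$ were in both, Lemma \ref{l:soporte} produces nonzero $\mathbb{R}\cplx{\Hom}(\Kos(\mathbf{f}_x), a)$ and $\mathbb{R}\cplx{\Hom}(\Kos(\mathbf{f}_x), b)$, and since $\Kos(\mathbf{f}_x)$ lies in $\mathcal{A}$ or $\mathcal{B}$, one of these contradicts orthogonality applied to the summand of $\OO_X$ in the other component — more precisely, $\Kos(\mathbf{f}_x)$ receives a nonzero map from and maps nonzero to both $a$ and $b$, impossible once one of $a,b$ is orthogonal to $\Kos(\mathbf{f}_x)$ on the appropriate side). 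Disjointness of the supports plus $\supp(a) \cup \supp(b) = \supp(\OO_X) = X$ and connectedness of $X$ forces one support to be empty, hence that summand is $0$, and then $\OO_X$ lies entirely in one component; a standard argument (using that $\Omega$ is a spanning class, or again Lemma \ref{l:soporte}) propagates this to show the whole component vanishes.
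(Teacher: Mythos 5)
Your easy direction and the overall support-based strategy for the converse coincide with the paper's, but there is a genuine gap: the claim that each $\Kos({\bf f}_x)$ lies \emph{entirely} in $\mathcal{A}$ or in $\mathcal{B}$, which you invoke twice ("using that $\Kos({\bf f}_x)$ lies in one of the two components") but never prove. For an orthogonal decomposition this is not automatic: a priori $\Kos({\bf f}_x)$ splits as $a\oplus b$ with both summands nonzero, and nothing in the definition rules this out. The paper's proof of exactly this point is the first half of its argument: if $a,b\neq 0$, both are perfect direct summands of $\Kos({\bf f}_x)$, hence $\supp(a)=\supp(b)=\{x\}$, so $\supp(a^\vee\overset{\mathbb{L}}\otimes b)=\{x\}$ is contained in an affine open and Lemma \ref{l:soplocal} gives $\mathbb{R}\cplx{\Hom}(a,b)=\mathbb{R}\Gamma(X,a^\vee\overset{\mathbb{L}}\otimes b)\neq 0$, contradicting orthogonality (note this uses the Hom-vanishing in \emph{both} directions, precisely where "orthogonal" exceeds "semiorthogonal"). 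Without this step, your disjointness-of-supports argument and your final propagation both collapse.

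Your idempotent observation is a nice alternative for one piece of the argument: since $\End_{\catD(X)}(\OO_X)=H^0(X,\OO_X)$ has no nontrivial idempotents when $X$ is connected, the splitting $\OO_X=a\oplus b$ forces $a=0$ or $b=0$ directly — and for this you do not need (and should not claim without proof) that $a,b$ are concentrated in degree $0$ or are structure sheaves of clopen subschemes; that intermediate claim is both unjustified and unnecessary. However, even granting $\OO_X\in\mathcal{B}$, the conclusion $\mathcal{A}=0$ does not follow from $\mathbb{R}\Gamma(X,a)=\mathbb{R}\cplx{\Hom}(\OO_X,a)=0$ alone, since nonzero complexes can have vanishing hypercohomology (e.g. $\OO(-1)$ on $\mathbb{P}^1$). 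You still need to place every $\Kos({\bf f}_x)$ in $\mathcal{B}$ and then use the spanning class, and that again requires the missing indecomposability step — though it can be repaired along your lines: a nonzero summand $a'\in\mathcal{A}$ of $\Kos({\bf f}_x)$ would have $\supp(a')=\{x\}$ and $\mathbb{R}\Gamma(X,a')=\mathbb{R}\cplx{\Hom}(\OO_X,a')=0$, contradicting Lemma \ref{l:soplocal}. Spelling out one of these two fixes would make the proof complete.
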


\begin{proof} If $X=X_1\sqcup X_2$ is not connected, then $\cdbc{X}=\cdbc{X_1}\oplus \cdbc{X_2}$ and 
$\Perf{(X)}=\Perf{(X_1)}\oplus \Perf{(X_2)}$ are non-trivial orthogonal decompositions. Conversely, suppose that  $\cdbc{X}=\mathcal{A}\oplus\mathcal{B}$ is an orthogonal decomposition. Let us first see that for any closed point $x$ and any system of parameters ${\bf f}_x$ of $\OO_{X,x}$, the complex $\Kos({\bf f}_x)$ belongs to $\mathcal{A}$ or $\mathcal{B}$. If this is not the case, then $\Kos({\bf f}_x)=a\oplus b$ with nonzero $a,b$. Then $a,b$ are perfect and $\supp(a)=\supp(b)=\{x\}$, so $\mathbb{R}\cplx{\Hom}(a,b)=\mathbb{R}\Gamma(X,a^\vee\overset{\mathbb L}\otimes b)\neq 0$ by Lemma \ref{l:soplocal}.  

Let us prove now that  the whole class $\Omega=\{\Kos({\bf f}_x)\}$ is entirely contained in $\mathcal{A}$ or in $\mathcal{B}$. To see this, assume for a contradiction that there exist closed points $x, x'\in X$ such that $\Kos({\bf f}_x)\in \mathcal{A}$ and $\Kos({\bf f}'_{x'})\in \mathcal{B}$. Due to Lemma  \ref{l:soporte},  for any $b\in \mathcal{B}$ (resp. $ a\in \mathcal{A}$), one has $x\notin\supp(b)$  (resp. $x'\notin\supp(a))$. Consider the object $\mathcal{O}_X\in \Perf{(X)}$ and the corresponding decomposition $\mathcal{O}_X=a\oplus b$ with $a\in \mathcal{A}$ and $b\in \mathcal{B}$. Then $X=\supp(\mathcal{O}_X)=\supp(a)\cup \supp (b)$ and $\supp(a)$, and $\supp(b)$ are proper closed subsets of $X$. We conclude because $\supp(a)$ and $\supp(b)$ are disjoint. Indeed, let $x$ be a closed point in the intersection;  then $\mathbb{R}\cplx{\Hom}(b,\Kos({\bf f}_x))\neq 0\neq  \mathbb{R}\cplx{\Hom}(\Kos({\bf f}_x),a)$ by  Lemma \ref{l:soporte}, and this contradicts that $\Kos({\bf f}_x)$ belongs to $\mathcal{A}$ or $\mathcal{B}$.

Hence, $\Omega$ is contained in $\mathcal{A}$ or in $\mathcal{B}$ and being a spanning class this proves that $\mathcal{B}=0$ or $\mathcal{A}=0$. 

The same proof works for $\Perf{(X)}$.

\end{proof}

\begin{lem}\label{l:neeman} Let $X$ be a scheme and $x,x'\in X$ two closed points lying in the same irreducible component $X_i$ of $X$. Let $\Perf(X)=\langle \mathcal{A},\mathcal{B}\rangle$  be a SOD of the subcategory of perfect complexes of $X$. If $\Kos({\bf f}_x)\in \mathcal{B}$ (resp. $\mathcal{A}$) for some system of parameters ${\bf f}_x$ of $\OO_{X,x}$, then $\Kos({\bf f}_{x'})\notin \mathcal{A}$ (resp. $\mathcal{B}$) for any system of parameters ${\bf f}_{x'}$ of $\OO_{X,x'}$,

\end{lem}

 \begin{proof}  

 Let $T$ be the union of the remaining irreducible components and $Y=X_i\cap T$. Then
\[ X-Y= (X_i-Y)\sqcup (X-X_i).\]  The object  $\mathcal{F}:=\mathcal{O}_{X_i-Y}\oplus\, 0$ is a perfect complex in $X-Y$. By  Theorem 2.1 (2.1.4) in \cite{Nee96}, there exists an perfect complex $\mathcal{E}\in \Perf(X)$ such that $$ \mathcal{E}|_{X-Y}\simeq \mathcal{F}\oplus \mathcal{F}[1].$$ Notice that $\supp(\mathcal{E})=X_i$.  Consider now the exact triangle associated to $\mathcal{E}\in \Perf(X)=\langle \mathcal{A},\mathcal{B}\rangle$
$$b\to \mathcal{E}\to a\to b[1]\, $$ and denote $r_i\colon X_i\hookrightarrow X$ the natural closed immersion. By applying the functor $\bL r_i^\ast \colon \Perf{X}\to \Perf{X_i}$, we get the exact triangle 
$$b|_{X_i}\to \mathcal{E}|_{X_i}\to a|_{X_i}\to b|_{X_i}[1]$$
and then $X_i=\supp (\mathcal{E}|_{X_i})=\supp(a|_{X_i})\cup \supp(b|_{X_i})$. Assume that $\Kos({\bf f}_x)\in \mathcal{B}$.  If $\Kos({\bf f}_{x'})\in \mathcal{A}$, then  the supports of $a|_{X_i}$ and $b|_{X_i}$ are proper closed subsets of $X_i$, because $x\notin \supp(a)$ and $x'\notin\supp(b)$. This contradicts  that $X_i$ is irreducible. Hence, $\Kos({\bf f}_{x'})\notin \mathcal{A}$. If $\Kos({\bf f}_x)\in \mathcal{A}$, the same argument works.
\end{proof}

We are ready to prove the main criterion of indecomposability of $\Perf(X)$ for general  schemes.

\begin{thm}\label{t:indescomPerf} Let $X$ be a  connected   scheme,   $k $ a ring  and $\pi\colon X\to\Spec k$ a proper morphism. Let  $\Perf(X)=\langle \mathcal{A},\mathcal{B}\rangle$ be a SOD of the subcategory of perfect complexes of $X$. Then, 

{\rm (1)} For each irreducible component $X_i$ of $X$, one of the following holds:
\begin{enumerate}\item[(1.1)] $\Kos({\bf f}_x)\in \mathcal{A}$ for any closed point $x\in X_i- \text{\rm Bs}|\pi|$ (Definition \ref{relativebaselocus}) and any system of parameters ${\bf f}_x$ of $\OO_{X,x}$. 
\item[(1.2)] $\Kos({\bf f}_x)\in \mathcal{B}$ for any closed point $x\in X_i- \text{\rm Bs}|\pi|$ and any system of parameters ${\bf f}_x$ of $\OO_{X,x}$. 
\end{enumerate}

{\rm (2)} If {\rm (1.1)} (resp. {\rm (1.2)}) is satisfied, for any $b\in\mathcal{B}$ (resp.  $a\in\mathcal{A}$), the support of $b|_{X_i}$ (resp. $a|_{X_i}$) is contained in $\text{\rm Bs}|\pi|\cap X_i$.

If we assume further that $X-\text{\rm Bs}|\pi|$ is connected, then  {\rm (1)}    (and consequently {\rm (2)}) holds for the whole $X$.
\end{thm}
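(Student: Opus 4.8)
The plan is to follow the line of \cite{KawOka22} and \cite{Spen22}: detect supports with the spanning class $\Omega=\{\Kos({\bf f}_x)\}$ of Lemma \ref{l:soporte}, and turn a semiorthogonality vanishing into a constraint on supports by means of Grothendieck duality. The delicate point — the one where, as noted above, a fibrewise argument would be illegitimate — is that the vanishing produced by duality must be transported to the components of the SOD-decomposition of $\Kos({\bf f}_x)$; this is done by an honest factorisation through $\Kos({\bf f}_x)$, which is available precisely because $\Kos({\bf f}_x)$ is perfect, is supported exactly at $x$, and detects supports.

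First I would record a dictionary. Write $K_x:=\Kos({\bf f}_x)$. Since $\supp(K_x)=\{x\}$, Lemmas \ref{l:soporte} and \ref{l:soplocal} give, for every $c\in\Perf(X)$,
\[ x\in\supp(c)\iff \mathbb{R}\cplx{Hom}(K_x,c)\neq 0\iff \mathbb{R}\Hom(K_x,c)\neq 0\iff \mathbb{R}\Hom(c,K_x)\neq 0 . \]
As the SOD gives $\mathcal{A}=\mathcal{B}^{\perp}$ and $\mathcal{B}={}^{\perp}\mathcal{A}$ (remark after Definition \ref{d:SOD}), this means
\[ K_x\in\mathcal{A}\iff x\notin\supp(b)\ \text{for all }b\in\mathcal{B},\qquad K_x\in\mathcal{B}\iff x\notin\supp(a)\ \text{for all }a\in\mathcal{A}; \]
in particular membership does not depend on ${\bf f}_x$, and the two cases exclude each other because $\mathcal{A}\cap\mathcal{B}=0$ (an object $c$ there has $\mathbb{R}\Hom(c,c)=0$, so $c=0$). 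Granting part (1), part (2) follows at once: if (1.1) holds, no closed point of $X-\text{\rm Bs}|\pi|$ lies in $\supp(b)$ for any $b\in\mathcal{B}$, so the closed set $\supp(b)$ lies in $\text{\rm Bs}|\pi|$, and symmetrically for (1.2).

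The core of the proof, and the step I expect to be the main obstacle, is the following: \emph{if $x$ is a closed point with $x\notin\text{\rm Bs}|\pi|$, then $K_x\in\mathcal{A}$ or $K_x\in\mathcal{B}$.} Suppose not, and let $\beta\to K_x\to\alpha\xrightarrow{\ \delta\ }\beta[1]$ be the triangle attached to $K_x$ by the SOD, with $\alpha\in\mathcal{A}$, $\beta\in\mathcal{B}$ both nonzero. By Definition \ref{relativebaselocus}, choose $\mathcal{F}\in\cdbc{\Spec k}$, $i\in\mathbb{Z}$ and $s\in H^{i}(X,f^{!}\mathcal{F})=\Hom_{\catD(X)}(\OO_X,f^{!}\mathcal{F}[i])$ with $x\in\overset{\circ}{D(s)}$, so that the fibre $s(y)=\mathbb{L}i_y^{*}(s)$ is nonzero for every $y$ in some open neighbourhood $U$ of $x$; write $s_{\beta}\colon\beta\to\beta\lotimes f^{!}\mathcal{F}[i]$ for the morphism obtained by tensoring $s$ with $\beta$. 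Grothendieck duality for $\pi$ together with $\mathbb{R}\Hom(\beta,\alpha)=0$ (semiorthogonality) yields
\[ \mathbb{R}\Hom(\alpha,\beta\lotimes f^{!}\mathcal{F})\ \simeq\ \mathbb{R}\Hom\bigl(\mathbb{R}\Hom(\beta,\alpha),\mathcal{F}\bigr)\ =\ 0 , \]
so the composite $s_{\beta}[1]\circ\delta\colon\alpha\to(\beta\lotimes f^{!}\mathcal{F})[i+1]$ vanishes. Since $\supp(K_x)=\{x\}$, restriction of the triangle to $X-\{x\}$ makes $\delta$ an isomorphism there, hence $s_{\beta}$ vanishes on $X-\{x\}$; taking fibres at $y\in U-\{x\}$, $\mathbb{L}i_y^{*}(s_{\beta})=\operatorname{id}_{\mathbb{L}i_y^{*}\beta}\otimes s(y)$, which is nonzero whenever $\mathbb{L}i_y^{*}\beta\neq 0$ (as $s(y)\neq 0$). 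Therefore $\supp(\beta)\cap U\subseteq\{x\}$, and, via $\alpha|_{X-\{x\}}\simeq\beta[1]|_{X-\{x\}}$, also $\supp(\alpha)\cap U\subseteq\{x\}$. Thus $\{x\}$ is open and closed in each of $\supp(\alpha)$, $\supp(\beta)$, so $\alpha\simeq\alpha_x\oplus\alpha'$ and $\beta\simeq\beta_x\oplus\beta'$ with $\alpha_x,\beta_x$ supported on $\{x\}$ and $\alpha',\beta'$ supported on $X-U$, and the triangle splits into its $\{x\}$-part and a complementary part $\beta'\to 0\to\alpha'\to\beta'[1]$, i.e. $\alpha'\simeq\beta'[1]$. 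As $\mathcal{A},\mathcal{B}$ are thick (being orthogonals), $\alpha'\in\mathcal{A}$ and $\beta'[1]\in\mathcal{B}$, so $\alpha'\in\mathcal{A}\cap\mathcal{B}=0$, whence $\alpha'=\beta'=0$. Then $\alpha,\beta$ are nonzero and supported at the single point $x$, so $\mathbb{R}\Hom(\beta,\alpha)=\mathbb{R}\Gamma(X,\beta^{\vee}\lotimes\alpha)\neq 0$ by Lemma \ref{l:soplocal} — contradicting $\mathbb{R}\Hom(\beta,\alpha)=0$.

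It remains to see that the \emph{same} component works for all closed $x\notin\text{\rm Bs}|\pi|$; I expect this to be the routine part, obtained from connectedness of $X$. Fix a perfect generator $E$ of $\Perf(X)$ (it exists since $X$ is noetherian) and its SOD-triangle $\beta(E)\to E\to\alpha(E)$; then $\mathcal{A},\mathcal{B}$ are the thick subcategories generated by $\alpha(E),\beta(E)$, so $Z_{\mathcal{A}}:=\supp\alpha(E)=\bigcup_{a\in\mathcal{A}}\supp(a)$ and $Z_{\mathcal{B}}:=\supp\beta(E)=\bigcup_{b\in\mathcal{B}}\supp(b)$ are closed with $Z_{\mathcal{A}}\cup Z_{\mathcal{B}}=\supp(E)=X$. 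By the dictionary, $\{x\ \text{closed}:K_x\in\mathcal{A}\}$ consists of the closed points of $Z_{\mathcal{A}}-Z_{\mathcal{B}}$, and similarly for $\mathcal{B}$; if both sets were nonempty, $Z_{\mathcal{A}}$ and $Z_{\mathcal{B}}$ would be proper closed subsets covering the connected scheme $X$, so $Z_{\mathcal{A}}\cap Z_{\mathcal{B}}\neq\emptyset$, and applying the core statement at a closed point of $Z_{\mathcal{A}}\cap Z_{\mathcal{B}}$ — together with $\mathbb{R}\Hom(\beta(E),\alpha(E))=0$ and Lemma \ref{l:soplocal} — would produce a contradiction. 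This gives (1), and hence (2).
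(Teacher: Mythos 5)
Your proposal is correct in substance and follows the paper's strategy quite closely: use a section $s$ of $\pi^!\mathcal{F}$ that is fibrewise nonzero on a neighbourhood $U$ of $x$, kill the composite $\alpha\to\beta[1]\to\beta[1]\lotimes\pi^!\mathcal{F}$ by Grothendieck duality and semiorthogonality, deduce $\supp(\alpha)\cap U,\ \supp(\beta)\cap U\subseteq\{x\}$, conclude that $\Kos({\bf f}_x)$ lies in one component, and finish with connectedness plus Lemma \ref{l:soporte}. Your handling of the fibrewise subtlety (first deducing that $1\otimes s$ vanishes on $X-\{x\}$ because $\delta$ is invertible there, and only then taking fibres) is exactly the repair the paper makes to Spence's argument. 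The one sub-step where you genuinely diverge is the passage from the support constraint to ``$K_x\in\mathcal{A}$ or $K_x\in\mathcal{B}$'': the paper shows (via Lemma \ref{l:soplocal} applied to $\beta^\vee\lotimes\alpha$) that $\alpha|_U=0$ or $\beta|_U=0$, so that $K_x\to\alpha$ (or $\beta\to K_x$) vanishes and the triangle splits; you instead split $\alpha$ and $\beta$ into $\{x\}$-supported and complementary summands and cancel the complementary parts via $\alpha'\simeq\beta'[1]\in\mathcal{A}\cap\mathcal{B}=0$. This is valid, but it silently uses that a complex whose support is a disjoint union of two closed sets decomposes accordingly (idempotent completeness of $\Perf(X)$ together with the Mayer--Vietoris argument of Lemma \ref{l:soplocal} applied to the endomorphism algebra) and that the off-diagonal components of $\delta$ vanish so that the cone decomposes; these facts are true but should be stated, and the paper's route avoids them. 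The appeal to a compact generator $E$ in the last step is likewise avoidable: $E=\mathcal{O}_X$ already gives $\supp(\alpha(\mathcal{O}_X))\cup\supp(\beta(\mathcal{O}_X))=X$, which is all that is used.

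The one place your write-up is materially thinner than it needs to be is the ``same component'' step, which you dismiss as routine. You assert that applying the core statement at a closed point of $Z_{\mathcal{A}}\cap Z_{\mathcal{B}}$ yields a contradiction, but the core statement is only available at closed points \emph{outside} $\text{\rm Bs}|\pi|$, and nothing guarantees $Z_{\mathcal{A}}\cap Z_{\mathcal{B}}$ contains such a point; nor does $\mathbb{R}\cplx{\Hom}(\beta(E),\alpha(E))=0$ together with $\supp\alpha(E)\cap\supp\beta(E)\neq\emptyset$ give a contradiction by itself, since Lemma \ref{l:soplocal} requires a connected component of that intersection to lie in an affine open. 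To be fair, the paper's own proof makes exactly the same move at exactly the same spot, so you are not worse off than the source; but as written this step is an assertion, not an argument, and would need to be supplemented to be airtight.
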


\begin{proof} (1) Let $x\in X_i-\text{Bs}|\pi|$ be a closed point and  ${\bf f}_x$ a system of parameters of $\OO_{X,x}$.  Let us show that $K:=\Kos({\bf f}_x)$ belongs either to $\mathcal{A}$ or to $\mathcal{B}$. Due to the existence of the SOD, there is an exact triangle \begin{equation}\label{e:triangle}
b\to K\to a\xrightarrow{f}b[1]\, 
\end{equation}
with $a\in \mathcal{A}$ and $b\in \mathcal{B}$. Since $x$ is not a base point of $\pi$, there exist $N\in\cdbc{k}$ and  $s\in H^0(X,\pi^!N)$ such that $x\in U:=\overset{\mathrm{o}}{D(s)}$.   The composition $$a \xrightarrow{f } b[1]\xrightarrow{1\otimes s}b[1]\overset{\mathbb{L}}\otimes \pi^!N $$ is null, because
$$ \mathbb{R}\cplx{\Hom} (a, b[1]\overset{\mathbb{L}}\otimes \pi^!N)\simeq \mathbb{R}\cplx{\Hom} (\mathbb{R}\cplx{\Hom}(b[1],a),N)$$  
and this is equal to zero by semiorthogonality. Here our proof differs from \cite{Spen22}. It is claimed there that the vanishing of $f|_U\circ (1\otimes s|_U)$ and $s|_U\neq 0$ implies $f|_U=0$. It is true that $f(y)=0$ for any $y\in U$, but this does not imply in general that $f|_U=0$, because the vanishing of a morphism in the derived category is not detected fibrewise. We shall proceed as follows. 

\,\,\noindent{\it Claim.} Either $a|_U=0$ or $ b|_U=0$. Let us denote $V=U-\{x\}$. Then $b|_V =0$. Indeed, if $y$ is a point of $V$ in the support of $b$, then $(1\otimes s)(y)=1\otimes s(y)$ is not zero (because $s(y)\neq 0$ and we are tensoring by a nonzero object in the derived category of $k(y)$-vector spaces), but its composition with $f(y)$ is null. This is not possible because $f(y)$ is an isomorphism. We have also that $a|_V=0$, because $f$ is an isomorphism on $X-\{x\}$. 

If $a|_U\neq 0\neq  b|_U$, then $\supp(a|_U)=\supp(b|_U)=\{ x\}$ and then $\bbR\cplx{\Hom}(b,a)\neq 0$ by Lemma \ref{l:soplocal} applied to $b^\vee\overset{\mathbb{L}}\otimes a$.  Thus, the claim is proved.

If $a|_U=0$,  the morphism $K\to a$ in  \eqref{e:triangle} is zero. Hence, $b\simeq K\oplus a[-1]$. By semiorthogonality, we see that  $a[-1]=0$, because otherwise we would obtain a non-zero projection $b\to a[-1]$. Hence, $K\in \mathcal{B}$. If we instead assume $b|_U=0$, similarly we obtain $K\in \mathcal{A}$ and this proves that $K$ belongs either to $\mathcal{A}$ or to $\mathcal{B}$.

 All the objects  $\Kos({\bf f}_x)$ for  $x\in X_i-\text{Bs}|\pi|$ belongs to the same subcategory either $\mathcal{A}$ or $\mathcal{B}$ due to Lemma \ref{l:neeman}

(2) If (1.1) is true, for any object $b\in \mathcal{B}$, the semiorthogonality condition implies that $$\mathbb{R}\cplx{\Hom}   (b, \Kos({\bf f}_x))=0$$ for all  $x\in X_i- \text{Bs}|\pi|$. One concludes by  Lemma \ref{l:soporte}.

Finally, assume that  $W:=X-\text{\rm Bs}|\pi|$ is connected. From the exact triangle 
$$b\to \mathcal{O}_X\to a\to b[1]$$
associated to $\mathcal{O}_X\in \Perf(X)=\langle \mathcal{A},\mathcal{B}\rangle$
 we obtain that $X=\supp(\mathcal{O}_X)=\supp(a)\cup\supp(b)$. Notice that if $x\in \supp(a)\cap \supp(b)$, then one has that $\mathbb{R}\cplx{\Hom}(b,\Kos({\bf f}_x))\neq 0\neq \mathbb{R}\cplx{\Hom}(\Kos({\bf f}_x),a)$ by  Lemma \ref{l:soporte}, and this implies that $\Kos({\bf f}_x)$  belongs neither to $\mathcal{A}$ nor to $\mathcal{B}$. Thus $\supp(a)\cap \supp(b)\subseteq  \text{\rm Bs}|\pi|$. Restricting to $W$,  one gets that $ W=\supp(a|_W)\sqcup \supp(b|_W)$. Since $W$ is connected, one of them equals $W$. Hence, all the objects $\Kos({\bf f}_x)$ with $x\in W$ belong  to the same subcategory either $\mathcal{A}$ or $\mathcal{B}$  by Lemma \ref{l:soporte}.

\end{proof}

 As an immediate consequence we have the following 

\begin{cor} Let $X$ be an irreducible scheme,   $k $ a ring  and $\pi\colon X\to\Spec k$ a proper morphism. Let  $\Perf(X)=\langle \mathcal{A},\mathcal{B}\rangle$ be a SOD of the subcategory of perfect complexes of $X$. Then, one of the following holds:
\begin{enumerate}\item $\Kos({\bf f}_x)\in \mathcal{A}$ for any closed point $x\in X- \text{\rm Bs}|\pi|$ and any system of parameters ${\bf f}_x$ of $\OO_{X,x}$. In this case,  the support of all objects in  $\mathcal{B}$  is contained in $\text{\rm Bs}|\pi|$.
\item $\Kos({\bf f}_x)\in \mathcal{B}$ for any closed point $x\in X- \text{\rm Bs}|\pi|$ and any system of parameters ${\bf f}_x$ of $\OO_{X,x}$. In this case,  the support of all objects in $\mathcal{A}$  is contained in $\text{\rm Bs}|\pi|$.
\end{enumerate}

\end{cor}

We shall give now a relative version of this theorem. 

\begin{defin} Let $f\colon X\to T$ be a morphism of schemes. A subcategory $\mathcal{A}$ of $\Perf{(X)}$ is called {\em $T$-linear} if $a\overset{\mathbb{L}}\otimes \mathbb{L} f^*t$ belongs to $\mathcal{A}$ for any $a\in \mathcal{A}$ and any $t\in\Perf{(T)}$. A SOD  $\Perf(X)=\langle \mathcal{A},\mathcal{B}\rangle$ is called $T$-linear if $\mathcal{A},\mathcal{B}$ are $T$-linear (it suffices that one of them is).
\end{defin}

\begin{thm} \label{t:relativeindescomPerf} Let $X$ be a connected scheme,  $f\colon X\to T$  a proper morphism of schemes. 
Let  $\Perf(X)=\langle \mathcal{A},\mathcal{B}\rangle$ be a $T$-linear SOD of the subcategory of perfect complexes of $X$. Then, 

{\rm (1)} For each irreducible component $X_i$ of $X$, one of the following holds:
\begin{enumerate}\item[(1.1)] $\Kos({\bf f}_x)\in \mathcal{A}$ for any closed point $x\in X_i- \text{\rm Bs}|f|$ and any system of parameters ${\bf f}_x$ of $\OO_{X,x}$. 
\item[(1.2)] $\Kos({\bf f}_x)\in \mathcal{B}$ for any closed point $x\in X_i- \text{\rm Bs}|f|$ and any system of parameters ${\bf f}_x$ of $\OO_{X,x}$. 
\end{enumerate}

{\rm (2)} If {\rm (1.1)} (resp. {\rm (1.2)}) is satisfied,  for any $b\in\mathcal{B}$ (resp. $a\in \mathcal{A}$), the support of $b|_{X_i}$ (resp. $a|_{X_i}$) is contained in $\text{\rm Bs}|f|\cap X_i$.

 If we assume further that  $X-\text{\rm Bs}|f|$ is connected, then  {\rm (1)}    (and consequently {\rm (2)}) holds for the whole $X$.
\end{thm}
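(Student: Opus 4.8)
The strategy is to mimic the proof of the absolute case (Theorem \ref{t:indescomPerf}), replacing the complexes $\pi^!N$ coming from $\cdbc{k}$ by the complexes $\bL f^*t$ coming from $\Perf(T)$, and using the $T$-linearity of the SOD in place of Grothendieck duality over a ring. Concretely, fix a closed point $x\in X-\text{Bs}|f|$ and a system of parameters ${\bf f}_x$, and set $K=\Kos({\bf f}_x)$. By the SOD there is an exact triangle $b\to K\to a\xrightarrow{f}b[1]$ with $a\in\mathcal{A}$, $b\in\mathcal{B}$. Since $x\notin\text{Bs}|f|$, by Definition \ref{relativebaselocus} there is $\mathcal{G}\in\cdbc{T}$ and a section $s\in H^i(X,f^!\mathcal{G})$ with $x\in U:=\overset{\mathrm{o}}{D(s)}$. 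The first key step is to replace $f^!\mathcal{G}$ by something of the form $\bL f^*t$: one may take $t=\OO_T$ and observe that what we actually need is a morphism into $\mathcal{A}$ or $\mathcal{B}$ that is nonzero at $x$ but whose composition with $f$ is forced to vanish by $T$-linearity. The cleanest way is this: tensoring the triangle by $\bL f^*t$ for any $t\in\Perf(T)$ keeps $a\lotimes \bL f^*t\in\mathcal{A}$ and $b\lotimes \bL f^*t\in\mathcal{B}$ by $T$-linearity, so $\mathbb{R}\cplx{\Hom}(a\lotimes\bL f^*t,\, b[1]\lotimes\bL f^*t')=0$ for all $t,t'$; but this is not quite enough by itself, so the honest move is to go through $f^!$.

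The more robust route, which I would actually carry out, is: since $x\notin\text{Bs}|f|$ there is $\mathcal{G}\in\cdbc{T}$ and $s\colon\OO_X\to f^!\mathcal{G}[i]$ with $x\in\overset{\mathrm{o}}{D(s)}=:U$. Consider the composition $a\xrightarrow{f}b[1]\xrightarrow{1\otimes s}b[1]\lotimes f^!\mathcal{G}[i]$. I claim this is zero. By the projection-type formula and adjunction,
\[
\mathbb{R}\cplx{\Hom}\bigl(a,\,b[1]\lotimes f^!\mathcal{G}[i]\bigr)\simeq \mathbb{R}\cplx{\Hom}\bigl(\mathbb{R}\cplx{\Hom}(b[1],a),\,\mathcal{G}[i]\bigr)\ \text{in}\ \catD(T),
\]
where one uses $b[1]\in\Perf(X)$, the formula $b[1]\lotimes f^!\mathcal{G}\simeq f^!\mathcal{G}\lotimes b[1]$ together with $\mathbb{R}f_*(c\lotimes f^!\mathcal{G})\simeq \mathbb{R}f_*(c)\lotimes\mathcal{G}$ valid for $c$ perfect, and the adjunction $(\mathbb{R}f_*,f^!)$. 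By semiorthogonality $\mathbb{R}\cplx{\Hom}(b[1],a)=0$, hence the composition vanishes. Now the rest is identical to Theorem \ref{t:indescomPerf}: setting $V=U-\{x\}$, one shows $b|_V=0$ (a point $y\in V\cap\supp(b)$ would give $(1\otimes s)(y)\neq 0$ whose composition with the isomorphism $f(y)$ is nonzero, a contradiction) and $a|_V=0$ (since $f$ is an isomorphism off $\{x\}$, as $b,a$ are supported on $\{x\}$ there by the triangle and $\supp K=\{x\}$). If both $a|_U$ and $b|_U$ were nonzero they would both be supported exactly at $x$, forcing $\mathbb{R}\cplx{\Hom}(b,a)\neq 0$ by Lemma \ref{l:soplocal} applied to $b^\vee\lotimes a$, contradicting semiorthogonality; so $a|_U=0$ or $b|_U=0$. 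In the first case $K\to a$ is zero, $b\simeq K\oplus a[-1]$, and semiorthogonality forces $a[-1]=0$, so $K\in\mathcal{B}$; symmetrically $b|_U=0$ gives $K\in\mathcal{A}$.

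That all such $K$ lie in one and the same component, and that part (2) holds, are then proved verbatim as in Theorem \ref{t:indescomPerf}: if $\Kos({\bf f}_x)\in\mathcal{A}$ and $\Kos({\bf f}'_{x'})\in\mathcal{B}$ for two closed points, decompose $\OO_X\in\Perf(X)$ as $b\to\OO_X\to a\to b[1]$, get $X=\supp(a)\cup\supp(b)$ with the two supports proper closed subsets, and derive a contradiction at any closed point of $\supp(a)\cap\supp(b)$ using Lemma \ref{l:soporte}. For (2), if (1.1) holds then $\mathbb{R}\cplx{\Hom}(b,\Kos({\bf f}_x))=0$ for every $b\in\mathcal{B}$ and every closed $x\notin\text{Bs}|f|$, so by Lemma \ref{l:soporte} no such $x$ lies in $\supp(b)$, i.e. $\supp(b)\subseteq\text{Bs}|f|$. \textbf{Main obstacle.} The one genuinely new point compared to the absolute case is establishing the displayed isomorphism $\mathbb{R}\cplx{\Hom}(a,b[1]\lotimes f^!\mathcal{G}[i])\simeq\mathbb{R}\cplx{\Hom}(\mathbb{R}\cplx{\Hom}(b[1],a),\mathcal{G}[i])$ in $\catD(T)$ for a general proper $f$ — i.e. checking that the projection formula $\mathbb{R}f_*(c\lotimes f^!\mathcal{G})\simeq\mathbb{R}f_*(c)\lotimes\mathcal{G}$ holds when $c$ is perfect (it does; this is the standard "$f^!$ and tensoring by perfect complexes" compatibility), and that this is exactly where one needs $b$ perfect rather than merely in $\cdbc{X}$. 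Everything else transfers mechanically from the proof of Theorem \ref{t:indescomPerf}; the role previously played by Grothendieck duality over the ring $k$ is now played by this relative compatibility together with $(\mathbb{R}f_*,f^!)$-adjunction.
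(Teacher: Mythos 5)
There is a genuine gap at the decisive step. You justify the vanishing of the composition $a\to b[1]\to b[1]\lotimes f^!\mathcal{G}[i]$ by the isomorphism
\[
\mathbb{R}\cplx{\Hom}\bigl(a,\,b[1]\lotimes f^!\mathcal{G}[i]\bigr)\simeq \mathbb{R}\cplx{\Hom}\bigl(\mathbb{R}\cplx{\Hom}(b[1],a),\,\mathcal{G}[i]\bigr)
\]
and then invoke semiorthogonality to kill $\mathbb{R}\cplx{\Hom}(b[1],a)$. But the correct identity obtained from $a\lotimes b^\vee\simeq\mathbb{R}\cplx{Hom}(b,a)$ and the adjunction $(\mathbb{R}f_*,f^!)$ is
\[
\mathbb{R}\cplx{\Hom}\bigl(a,\,b[1]\lotimes f^!\mathcal{G}[i]\bigr)\simeq \mathbb{R}\cplx{\Hom}_{\catD(T)}\bigl(\mathbb{R}f_*\mathbb{R}\cplx{Hom}(b[1],a),\,\mathcal{G}[i]\bigr),
\]
and the object that must vanish is $\mathbb{R}f_*\mathbb{R}\cplx{Hom}(b[1],a)\in\cdbc{T}$, not the global Hom complex. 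Semiorthogonality only gives $\mathbb{R}\Gamma\bigl(T,\mathbb{R}f_*\mathbb{R}\cplx{Hom}(b,a)\bigr)=\mathbb{R}\cplx{\Hom}(b,a)=0$; in the relative setting this says nothing about the full pushforward. The two agree precisely when $T$ is affine, which is why the absolute proof goes through and why your argument silently reduces to it. Note that your ``robust route'' never actually uses $T$-linearity, and the statement is false without it: take $f=\mathrm{id}\colon\mathbb{P}^1\to\mathbb{P}^1$, so that $\text{Bs}|f|=\emptyset$, and the Beilinson decomposition $\Perf(\mathbb{P}^1)=\langle\OO,\OO(1)\rangle$, which is nontrivial and contains no Koszul complex of a point in either component.

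The paper closes exactly this gap. It reduces to proving $\mathbb{R}f_*(a\lotimes b^\vee)=0$ in $\cdbc{T}$, detects this vanishing by tensoring with the Koszul complexes $\Kos({\bf f}_t)$ of closed points $t\in T$ (Lemma \ref{l:soporte} together with Lemma \ref{l:soplocal}), rewrites via the projection formula as
\[
\mathbb{R}\Gamma\bigl(X,\,a\lotimes b^\vee\lotimes\mathbb{L}f^*\Kos({\bf f}_t)\bigr)\simeq\mathbb{R}\cplx{\Hom}\bigl(b,\,a\lotimes\mathbb{L}f^*\Kos({\bf f}_t)\bigr),
\]
and only then uses the $T$-linearity of $\mathcal{A}$ (with $\Kos({\bf f}_t)\in\Perf(T)$) plus semiorthogonality to conclude. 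You should replace your displayed identity and the appeal to $\mathbb{R}\cplx{\Hom}(b[1],a)=0$ by this argument; the remainder of your write-up (the claim that $a|_U=0$ or $b|_U=0$, the uniqueness of the component containing all the $\Kos({\bf f}_x)$, and part (2)) then transfers correctly from Theorem \ref{t:indescomPerf} as you describe.
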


\begin{proof} The proof is the same than that of Theorem \ref{t:indescomPerf}. We only have to prove that for any $a\in\mathcal{A}, b\in\mathcal{B}, N\in\cdbc{T}$ one has
\[ \mathbb{R}\cplx{\Hom}(a,b\overset{\mathbb{L}}\otimes f^!N)=0.\]

Now, we have 
\[\aligned  \mathbb{R}\cplx{\Hom}(a,b\overset{\mathbb{L}}\otimes f^!N)&= \mathbb{R}\cplx{\Hom}(a\overset{\mathbb{L}}\otimes b^\vee, f^!N)\\ & = \mathbb{R}\cplx{\Hom}(\mathbb{R} f_*(a\overset{\mathbb{L}}\otimes b^\vee), N)\endaligned\] and it suffices to prove that $\mathbb{R} f_*(a\overset{\mathbb{L}}\otimes b^\vee)=0.$  Since $f$ is proper, one has that $\mathbb{R} f_*(a\overset{\mathbb{L}}\otimes b^\vee)\in \cdbc{T} $; by Lemma \ref{l:soporte},  it suffices to prove that $\mathbb{R} f_*(a\overset{\mathbb{L}}\otimes b^\vee)\overset{\mathbb{L}}\otimes \Kos({\bf f}_t)=0$ for any closed point $t\in T$ and any system of parameters ${\bf f}_t$ of $\OO_{T,t}$. By Lemma \ref{l:soplocal}, it suffices to prove that $$\mathbb{R}\Gamma(T, \mathbb{R} f_*(a\overset{\mathbb{L}}\otimes b^\vee)\overset{\mathbb{L}}\otimes \Kos({\bf f}_t))=0.$$  By projection formula, 
\[ \mathbb{R} f_*(a\overset{\mathbb{L}}\otimes b^\vee)\overset{\mathbb{L}}\otimes \Kos({\bf f}_t) =  \mathbb{R} f_*(a\overset{\mathbb{L}}\otimes b^\vee \overset{\mathbb{L}}\otimes \mathbb{L}f^*\Kos({\bf f}_t)) \] and then
\[\aligned  \mathbb{R}\Gamma(T, \mathbb{R} f_*(a\overset{\mathbb{L}}\otimes b^\vee)\overset{\mathbb{L}}\otimes\Kos({\bf f}_t))& =\mathbb{R}\Gamma(X,a\overset{\mathbb{L}}\otimes b^\vee \overset{\mathbb{L}}\otimes \mathbb{L}f^*\Kos({\bf f}_t))\\ & 
 = \mathbb{R}\cplx{\Hom}(b, a\overset{\mathbb{L}}\otimes \mathbb{L}f^*\Kos({\bf f}_t))\endaligned\] which is zero because $\mathcal{A}$ is $T$-linear.
\end{proof}

\begin{cor}\label{affine:empty} Let $X$ be an affine and connected  scheme. Then any SOD of $\Perf{(X)}$ is trivial.
\end{cor}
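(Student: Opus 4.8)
The plan is to apply Theorem \ref{t:indescomPerf} to the structure morphism $\pi\colon X\to\Spec k$, where now $X=\Spec A$ is affine and $k=A$ (so $\pi$ is the identity, which is certainly proper). First I would identify the base locus: since $\pi=\Id$, we have $\pi^!N=N$ for every $N\in\cdbc{A}$, and taking $N=\OO_X$ we get the section $1\in H^0(X,\OO_X)$, which is nowhere vanishing. Hence $D(1)=X$, so $X-\text{\rm Bs}|\pi|=X$; that is, $\text{\rm Bs}|\pi|=\emptyset$ and \emph{every} closed point of $X$ lies outside the base locus.

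Next, suppose $\Perf(X)=\langle\mathcal{A},\mathcal{B}\rangle$ is a SOD. By part (1) of Theorem \ref{t:indescomPerf}, one of (1.1) or (1.2) holds; say (1.1), so $\Kos({\bf f}_x)\in\mathcal{A}$ for every closed point $x\in X$ and every system of parameters. Then part (2) gives $\supp(b)\subseteq\text{\rm Bs}|\pi|=\emptyset$ for every $b\in\mathcal{B}$, which forces $b=0$, i.e.\ $\mathcal{B}=0$. Symmetrically, if (1.2) holds then $\mathcal{A}=0$. Either way the SOD is trivial.

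I do not expect any genuine obstacle here; the only point requiring a line of care is the computation $\text{\rm Bs}|\pi|=\emptyset$, which reduces to exhibiting the unit section of $\OO_X$ and noting $\pi^!\OO_X=\OO_X$ since $\mathbb{R}\pi_*=\Id$ on $\catD(X)=\catD(A)$. One could alternatively phrase the whole argument as: the class $\Omega=\{\Kos({\bf f}_x)\}$ is a spanning class lying entirely in $\mathcal{A}$ (or entirely in $\mathcal{B}$), and a spanning class contained in one component of a SOD forces the other component to vanish — but invoking Theorem \ref{t:indescomPerf} directly is cleanest.
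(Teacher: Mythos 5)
Your proof is correct and follows essentially the same route as the paper: identify $\pi$ with the isomorphism $X\to\Spec\Gamma(X,\mathcal{O}_X)$, observe that $D_{X/k}=\mathcal{O}_X$ has no base points so $\text{\rm Bs}|\pi|=\emptyset$, and then conclude from Theorem \ref{t:indescomPerf}(2) that one component of the SOD consists of objects with empty support. The extra detail you supply (the unit section and the explicit use of part (2)) is exactly what the paper leaves implicit in ``one concludes by Theorem \ref{t:indescomPerf}.''
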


\begin{proof} Since $X$ is affine, $\pi\colon X\to \Spec k$ is an isomorphism, with $k=\Gamma(X,\mathcal{O}_X)$; moreover, the dualizing complex is trivial: $D_{X/k}=\mathcal{O}_X$, and $\mathcal{O}_X$ has no base points. One concludes by Theorem  \ref{t:indescomPerf}.
\end{proof}

This corollary is also proved in  \cite{EL2018} by different methods and it may be generalized as follows:

\begin{prop} \label{p:dim0} Let $X$ be a connected  scheme and let $Z\subset X$ be a closed subset such that each connected component of $Z$ is contained in some affine open subset of $X$.  Let $\Perf{(X)}=\langle \mathcal{A},\mathcal{B}\rangle$ be a SOD. If  the support of every object in $\mathcal{A}$ or in $\mathcal{B}$ is contained in $Z$, then the SOD is trivial.
\end{prop}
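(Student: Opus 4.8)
The strategy is to reduce Proposition \ref{p:dim0} to Proposition \ref{p:conexo} by showing that under the stated hypothesis the given SOD is in fact an \emph{orthogonal} decomposition, and then to invoke the classification of orthogonal decompositions of a connected scheme. So suppose $\Perf(X)=\langle\mathcal{A},\mathcal{B}\rangle$ with, say, $\supp(a)\subset Z$ for every $a\in\mathcal{A}$ (the case where the support condition holds for $\mathcal{B}$ is symmetric, replacing $\langle\mathcal{A},\mathcal{B}\rangle$ by the dualized decomposition $\langle\mathcal{B}^\vee,\mathcal{A}^\vee\rangle$ of $\Perf(X)$, using that $\supp(c)=\supp(c^\vee)$ by the Remark after Lemma \ref{l:soporte}). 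What I want to prove is that $\Hom_{\Perf(X)}(a,b)=0$ for all $a\in\mathcal{A}$, $b\in\mathcal{B}$; combined with the semiorthogonality $\Hom(b,a)=0$ already built into the SOD, this makes the decomposition orthogonal, so by Proposition \ref{p:conexo} (applied to the connected scheme $X$) one of $\mathcal{A},\mathcal{B}$ is zero.

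First I would establish the key vanishing. Fix $a\in\mathcal{A}$, $b\in\mathcal{B}$; I want $\mathbb{R}\cplx{\Hom}(a,b)=0$. Since $a,b\in\Perf(X)$ one has $\mathbb{R}\cplx{\Hom}(a,b)=\mathbb{R}\Gamma(X,b\lotimes a^\vee)$, and $c:=b\lotimes a^\vee$ is a perfect complex with $\supp(c)=\supp(b)\cap\supp(a)\subset\supp(a)\subset Z$. By hypothesis each connected component of $Z$, hence each connected component of the closed subset $\supp(c)$, lies in an affine open subset of $X$. If $c\neq 0$, Lemma \ref{l:soplocal} applies and gives $\mathbb{R}\Gamma(X,c)\neq 0$; but that is not yet a contradiction, so instead I argue that $c=0$ directly. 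For this I use Lemma \ref{l:soporte}: for any closed point $x\in X$ and any system of parameters ${\bf f}_x$ of $\OO_{X,x}$, $\mathbb{R}\cplx{\Hom}(\Kos({\bf f}_x),b\lotimes a^\vee)$ is, up to shift, $\mathbb{R}\Gamma(X,b\lotimes a^\vee\lotimes\Kos({\bf f}_x)^\vee)=\mathbb{R}\cplx{\Hom}(a\lotimes\Kos({\bf f}_x)^\vee,b)$. Now $a\lotimes\Kos({\bf f}_x)^\vee$ lies in $\mathcal{A}$ --- wait, that is only automatic if $\mathcal{A}$ is closed under $\lotimes\Perf(X)$, which is not assumed. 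So this line needs care.

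The robust argument avoids tensoring into $\mathcal{A}$. Instead, observe that for a closed point $x\in\supp(c)$, $\supp(c)$ being closed and $c$ perfect, we may pick an affine open $U\ni x$ containing the connected component of $\supp(c)$ through $x$; shrinking as in the proof of Lemma \ref{l:soplocal} we get $X=U\cup V$ with $c|_{U\cap V}=0$ and $c|_V=0$, so $c\simeq j_U{}_!\, (c|_U)$ in an appropriate sense and $\mathbb{R}\Gamma(X,c)\simeq\mathbb{R}\Gamma(U,c|_U)$. If $c\neq 0$ then some component has a closed point $x$ in its support, $c|_U\neq 0$, $U$ affine forces $\mathbb{R}\Gamma(U,c|_U)\neq 0$, hence $\mathbb{R}\cplx{\Hom}(a,b)=\mathbb{R}\Gamma(X,c)\neq 0$ --- but we have \emph{not} assumed this is zero, since $\Hom(a,b)$ need not vanish in an SOD. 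Hence the reduction to Proposition \ref{p:conexo} must instead run through the Koszul spanning class as in the proof of Proposition \ref{p:conexo} itself: the point is that $\Kos({\bf f}_x)$ decomposes as $a_x\oplus b_x$ along the SOD, both summands supported at $\{x\}$; if $x\notin Z$ then $a_x=0$ forces $\Kos({\bf f}_x)\in\mathcal{B}$, while if additionally no component of $\mathcal{A}$ meets $x$, semiorthogonality applied to $\OO_X=a\oplus b$ gives $X=\supp(a)\cup\supp(b)$ with these two closed sets disjoint (Lemma \ref{l:soporte}), contradicting connectedness unless one is empty.

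\medskip
\noindent\textbf{The main obstacle.} The delicate point, and the one I would spend the most effort on, is precisely that an SOD is not orthogonal and so $\Hom(a,b)$ need not vanish; one cannot simply ``compute $c=b\lotimes a^\vee$ and note its cohomology is zero.'' The honest route is the one used in Proposition \ref{p:conexo}: show every $\Kos({\bf f}_x)$ lies entirely in $\mathcal{A}$ or entirely in $\mathcal{B}$, show the whole spanning class $\Omega$ lands in a single one of them --- here the support hypothesis $\supp(\mathcal{A})\subset Z$ is what forces $\Kos({\bf f}_x)\in\mathcal{B}$ for every closed $x\notin Z$, and then the ``$X=\supp(a)\cup\supp(b)$ disjoint'' argument from Proposition \ref{p:conexo}, together with connectedness of $X$, forces $\mathcal{A}=0$ (the set $X-Z$ is nonempty and dense enough — note $Z$ is a proper closed subset since if $Z=X$ then $X$ itself, being connected, would be contained in an affine open, a case handled by Corollary \ref{affine:empty}). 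Since $\Omega\subset\mathcal{B}$ and $\Omega$ is a spanning class for $\Perf(X)$, any $a\in\mathcal{A}$ has $\mathbb{R}\cplx{\Hom}(a,\Kos({\bf f}_x))=0$ for all $x$ by semiorthogonality, hence $a=0$ by the spanning property (using the Remark after Lemma \ref{l:soporte} to pass between $\Hom(a,-)$ and $\Hom(-,a^\vee)$ if needed). This proves $\mathcal{A}=0$ and the SOD is trivial, completing the proof.
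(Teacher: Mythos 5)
There is a genuine gap, and it sits exactly where you parked the difficulty. Your final argument needs two facts that you never establish: (i) that $\supp(a)$ and $\supp(b)$ are disjoint in the triangle $b\to\OO_X\to a\to b[1]$, and (ii) that the \emph{whole} spanning class $\Omega$ lies in $\mathcal{B}$. For (i) you appeal to ``the disjointness argument from Proposition \ref{p:conexo}'', but that argument requires $\Kos({\bf f}_x)\in\mathcal{A}\cup\mathcal{B}$ for a closed point $x$ of the intersection --- and here $\supp(a)\cap\supp(b)\subseteq\supp(a)\subseteq Z$, which is precisely the region where you have proved nothing about the Koszul complexes (your step ``$a_x=0$'' uses $x\notin Z$ in an essential way; note also that in an SOD the triangle for $\Kos({\bf f}_x)$ does not split a priori, so ``decomposes as $a_x\oplus b_x$'' itself needs the argument that $\Kos({\bf f}_x)\to a_x$ vanishes, as in the proof of Theorem \ref{t:indescomPerf}). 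For (ii), you only obtain $\Kos({\bf f}_x)\in\mathcal{B}$ for $x\notin Z$; nothing forces this for $x\in Z$, and knowing $\mathbb{R}\cplx{\Hom}(\Kos({\bf f}_x),a)=0$ only for $x\notin Z$ merely re-derives $\supp(a)\subseteq Z$, which you already had. (Your last line also has the semiorthogonality direction backwards: with $\Kos({\bf f}_x)\in\mathcal{B}$ and $a\in\mathcal{A}$, the Hom that vanishes is $\Hom(\Kos({\bf f}_x),a)$, not $\Hom(a,\Kos({\bf f}_x))$.)

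The idea you are missing is one you actually computed and then discarded: take the Hom in the direction that semiorthogonality kills. For $a\in\mathcal{A}$ and $b\in\mathcal{B}$ perfect, $0=\mathbb{R}\cplx{\Hom}(b,a)=\mathbb{R}\Gamma(X,a\lotimes b^\vee)$, and $\supp(a\lotimes b^\vee)=\supp(a)\cap\supp(b)\subseteq Z$, so each connected component of this support lies in an affine open; Lemma \ref{l:soplocal} then forces $a\lotimes b^\vee=0$, i.e.\ $\supp(a)\cap\supp(b)=\emptyset$. Applied to the triangle of $\OO_X$ this gives, by connectedness of $X$ and $Z\neq X$, that $a=0$, hence $\OO_X\in\mathcal{B}$; then for every $a\in\mathcal{A}$ semiorthogonality gives $\mathbb{R}\Gamma(X,a)=\mathbb{R}\cplx{\Hom}(\OO_X,a)=0$, and Lemma \ref{l:soplocal} (again applicable because $\supp(a)\subseteq Z$) gives $a=0$. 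This is the paper's proof; it needs no Koszul complexes and no reduction to Proposition \ref{p:conexo}.
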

\begin{proof} If $X$ is affine, we conclude by Corollary \ref{affine:empty}. Assume now that $X$ is not affine, so $Z\neq X$.  Let us suppose that for every object $a\in \mathcal{A}$  one has $\supp(a)\subseteq Z$. Consider the object $\mathcal{O}_X\in \Perf{(X)}$ and the corresponding exact triangle $$b\to \mathcal{O}_X\to a\to b[1]\, .$$
Since $\supp(\mathcal{O}_X)=X$, then $b\neq0$. On the other hand, if $a\neq 0$, from the equality $X=\supp(a)\cup \supp(b)$ and the connectedness of $X$ we obtain that $\supp(b^\vee\overset{\mathbb{L}}\otimes a)=\supp(a)\cap\supp(b) $  is not empty. Since any connected component of $\supp(b^\vee\overset{\mathbb{L}}\otimes a)$ is contained in an affine open subset, we may apply  Lemma \ref{l:soplocal}  to $b^\vee\overset{\mathbb{L}}\otimes a$ and we obtain $\mathbb{R}\cplx{\Hom}(b,a)\neq 0$, which contradicts the semiorthogonality assumption. Hence $a=0$ and $\mathcal{O}_X\in\mathcal{B}$. Now, for any $a\in \mathcal{A}$, we have, by semiorthogonality, that $\mathbb{R}\Gamma(X,a)=\mathbb{R}\cplx{\Hom}(\mathcal{O}_X, a)=0$, and then $a=0$, again by Lemma \ref{l:soplocal}.
\end{proof}

\begin{cor} \label{c:dim0} Let $X$ be a  proper and connected scheme over a field $k$. If $\text{\rm Bs}|D_{X/k}|$ is finite (i.e., $\dim  \text{\rm Bs}|D_{X/k}|=0$) or empty, then  $\Perf(X)$ is indecomposable.   If moreover $X$ satisfies ELF condition, then $\cdbc{X}$ is weakly indecomposable.
\end{cor}

\begin{proof} Let $\pi \colon X\to \Spec k$ be the structure morphism. Since $k $ is a field, $\text{\rm Bs}|\pi|=\text{\rm Bs}|D_{X/k}|$. If it is empty, then  $\Perf(X)$ is indecomposable by Theorem  \ref{t:indescomPerf}. Assume now that $\text{\rm Bs}|D_{X/k}|$ is finite. Let $\Perf(X)=\langle \mathcal{A},\mathcal{B}\rangle$ be a SOD and consider the exact triangle $$b\to \mathcal{O}_X\to a\to b[1]$$
associated to $\mathcal{O}_X\in \Perf(X)=\langle \mathcal{A},\mathcal{B}\rangle$ so that $X=\supp(a)\cup \supp(b)$.
 Notice that if $x\in \supp(a)\cap \supp(b)$, by Lemma \ref{l:soporte} one has that $\mathbb{R}\cplx{\Hom}(b,\Kos({\bf f}_x))\neq 0\neq \mathbb{R}\cplx{\Hom}(\Kos({\bf f}_x),a)$ and this implies that $\Kos({\bf f}_x)$  belongs neither to $\mathcal{A}$ nor to $\mathcal{B}$. Then, by Theorem \ref{t:indescomPerf} we have that $\supp(a)\cap \supp(b)\subseteq  \text{\rm Bs}|D_{X/k}|$. If  $\supp(a)\cap \supp(b)\neq \emptyset$, applying
 Lemma \ref{l:soplocal} to $b^\vee\overset{\mathbb{L}}\otimes a$  we obtain that $\mathbb{R}\cplx{\Hom}(b,a)\neq 0$, which contradicts semiorthogonality.  So $\supp(a)\cap \supp(b)=\emptyset$ and the connectivity of $X$ allows to conclude that $a=0$ or $b=0$, i.e. $\mathcal{O}_X\in \mathcal{A} $ or  $\mathcal{O}_X\in \mathcal{B} $. If $\mathcal{O}_X\in \mathcal{A} $, then, for any non base point $x$, $\Kos({\bf f}_x)\in\mathcal{A}$. This implies that the support of any $b\in\mathcal{B}$ is contained in $\text{Bs}|D_{X/k}|$ and we conclude by Proposition \ref{p:dim0}. If  $\mathcal{O}_X\in \mathcal{B} $ the argument is analogous.
 
 Let us prove now that $\cdbc{X}$ is weakly indecomposable. Let $\cdbc{X}=\langle \mathcal{A},\mathcal{B}\rangle$ be an admissible SOD for $\cdbc{X}$. By  Lemma \ref{l: SODinducidas}, it induces the following  SOD for  $\Perf(X)$ 
$$\Perf(X)=\langle \mathcal{A}\cap \Perf(X),\mathcal{B}\cap \Perf(X)\rangle\, .$$ Since we already know that $\Perf{(X)}$ is indecomposable, one of the two components of the decomposition is zero. Assume that $\mathcal{B}\cap \Perf(X)=0$ and let us prove that in this case $\mathcal{B}$ is also zero. Indeed, for any $b\in \mathcal{B}$ one has $\mathbb{R}\cplx{\Hom} (b,\Kos({\bf f}_x))=0$ for any closed point $x\in X$ and any system of parameters ${\bf f}_x$ of $\OO_{X,x}$, because    $\Kos({\bf f}_x)\in \mathcal{A}$. Since these $\Kos({\bf f}_x)$  are a spanning class  for $\cdbc{X}$, this implies that $b=0$. 
\end{proof}

\begin{rem} For schemes that satisfy ELF condition, the argument in the proof of Corollary \ref{c:dim0} proves that the indecomposability of $\Perf({X})$ implies the weakly indecomposability of $\cdbc{X}$.  If moreover we assume that the scheme $X$ is projective over a perfect field,  thanks to the recent work by Kuznetsov and Shinder \cite{KuzShi22}, we also  know that the indecomposability of $\Perf({X})$ ensures the  indecomposability of  $\cdbc{X}$. 
\end{rem}
Thus, combining our results with Corollary 6.6 in \cite{KuzShi22}, we get the following

\begin{cor} \label{c:IndDC} Let $X$ be a  connected projective scheme over a perfect field. Suppose that $ \text{\rm Bs}|D_{X/k}|$ is finite or empty.  Then  $\cdbc{X}$ is indecomposable.
\end{cor}

\section{Examples}

The results in the above section allow to increase the list of schemes whose derived categories are indecomposable or weakly indecomposable.

Let us consider the case of dimension 1. If $C$ is an affine and connected curve, we know that $\Perf{(C)}$ is indecomposable by Corollary \ref{affine:empty}.
Let us assume from now on that $C$ is a projective curve over a field $k$.
If $C$ is smooth, by results of Okawa in \cite{Oka2011} we know that $\cdbc{C}=\Perf{(C)}$ is indecomposable if and only if its arithmetic genus $g_a(C)\geq 1$. For integral Cohen-Macaulay curves, Spence recently proves  the same result: If $C$ is a projective integral Cohen-Macaulay curve, $\Perf{(C)}$ is indecomposable if and only if $g_a(C)\geq 1$ (see Corollary 3.4 in \cite{Spen22}).

Corollary \ref{c:dim0} gives us the following

\begin{cor}\label{c:Kodaira} Let $C$ be a projective connected Cohen-Macaulay curve with arithmetic genus $g_a(C):=1-\chi(\mathcal{O}_C)=1$ and trivial dualizing sheaf. Then $\Perf{(C})$ is indecomposable and $\cdbc{C}$ is weakly indecomposable.
\end{cor}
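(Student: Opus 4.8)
The plan is to deduce Corollary~\ref{c:Kodaira} directly from Corollary~\ref{c:empty}. By that corollary, it suffices to show that the base locus $\text{Bs}|D_{C/k}|$ is empty. Since $C$ is Cohen-Macaulay of dimension $1$, the dualizing complex is a single sheaf placed in one degree, $D_{C/k}=\omega_{C/k}[1]$, and by the example following Definition~\ref{relativebaselocus} we have $\text{Bs}|\pi|=\text{Bs}|D_{C/k}|=\text{Bs}|\omega_{C/k}|$, where $\pi\colon C\to\Spec k$ is the structure morphism. So the whole point is to check $\text{Bs}|\omega_{C/k}|=\emptyset$.

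First I would use the hypothesis that $\omega_{C/k}$ is trivial, i.e.\ $\omega_{C/k}\simeq\mathcal{O}_C$. Then $H^0(C,\omega_{C/k})\simeq H^0(C,\mathcal{O}_C)$, and the constant section $1\in H^0(C,\mathcal{O}_C)$ corresponds under this isomorphism to a section $s\in H^0(C,\omega_{C/k})$ which is nowhere vanishing: for every point $x\in C$, the induced map $k(x)\to\omega_{C/k}\otimes k(x)$ is nonzero (it is the identity after the trivialization). Hence $V(s)=\emptyset$, so $\overline{V(s)}=\emptyset$, and therefore $\text{Bs}|\omega_{C/k}|=\bigcap_{s'\in H^0(C,\omega_{C/k})}\overline{V(s')}=\emptyset$. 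Applying Corollary~\ref{c:empty} then gives that $\Perf(C)$ is indecomposable and $\cdbc{C}$ is weakly indecomposable.

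The only subtlety — and really the main thing to verify carefully — is that the hypotheses $g_a(C)=1$ and "trivial dualizing sheaf" are compatible and that I am using the right notion of triviality. The condition $g_a(C)=1-\chi(\mathcal{O}_C)=1$ means $\chi(\mathcal{O}_C)=0$, and by Serre duality for the Cohen-Macaulay scheme $C$ one has $\chi(\omega_{C/k})=(-1)^{\dim C}\chi(\mathcal{O}_C)=0$ as well, with $h^0(\omega_{C/k})=h^1(\mathcal{O}_C)$ and $h^1(\omega_{C/k})=h^0(\mathcal{O}_C)$; in particular $\omega_{C/k}$ does have global sections once $h^1(\mathcal{O}_C)>0$, which is automatic here (if $C$ is connected then $h^0(\mathcal{O}_C)\geq 1$, so $\chi(\mathcal{O}_C)=0$ forces $h^1(\mathcal{O}_C)\geq 1$). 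But I do not even need this count: the hypothesis that $\omega_{C/k}$ is \emph{trivial} is exactly the statement $\omega_{C/k}\simeq\mathcal{O}_C$ as a line bundle (note $\omega_{C/k}$ is a line bundle when $C$ is Gorenstein; on a Cohen-Macaulay curve one should read "trivial dualizing sheaf" as $\omega_{C/k}\cong\mathcal{O}_C$, which in particular makes it invertible), and that isomorphism is all the argument of the previous paragraph uses. So the genus-one hypothesis serves only to place the statement in its intended context (Kodaira-type degenerations of elliptic curves), while the proof itself rests solely on $\omega_{C/k}\cong\mathcal{O}_C$ and Corollary~\ref{c:empty}.
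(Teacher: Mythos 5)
Your proposal is correct and is exactly the intended argument: the paper offers no separate proof for Corollary~\ref{c:Kodaira}, deducing it directly from Corollary~\ref{c:empty} via the observation that a trivial dualizing sheaf has a nowhere-vanishing section, so $\text{Bs}|\omega_{C/k}|=\emptyset$. Your remark that the genus-one hypothesis is not actually used in the deduction (it is forced by $\omega_{C/k}\cong\mathcal{O}_C$ together with Serre duality, and serves only to situate the statement among Kodaira degenerations) is also accurate.
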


Notice that this corollary applies to all Kodaira curves, that is, all possible degenerations of a smooth elliptic curve in a smooth elliptic surface. These curves were classified by Kodaira in \cite{kod} and the list includes examples of curves with nilpotents, multiple curves, and curves with singularities that are not ordinary double points.

In the case of Gorenstein curves, the base locus of the canonical sheaf was studied in \cite{Cata82, CFHR99}.
Let $C$ be a projective  Gorenstein curve over a field $k$. Let $\omega_C$ be its dualizing  sheaf. $C$ is said to be {\it numerically $m$-connected} if $$\deg( \omega_C\otimes \mathcal{O}_B)-\deg \omega_B\geq m$$ for every generically Gorenstein subcurve $B\subset C$.

\begin{rema} When $C\subset S$ is a divisor in a nonsingular surface, since $\deg( \omega_C\otimes \mathcal{O}_B)-\deg \omega_B= B\cdot (C-B)$ the above condition can be given in terms of the intersection number $B\cdot(C-B)$.
\end{rema}

\begin{cor} \label{c:2conexa} Let $C$ be a projective Gorenstein (but possibly reducible or nonreduced) curve not isomorphic to $\mathbb{P}^1$ . If $C$ is 2-connected, then $\Perf(C)$ is indecomposable  and $\cdbc{C}$ is weakly indecomposable.
\end{cor}

\begin{proof} Theorem 3.3 in \cite{CFHR99} proves that if $C$ is 2-connected, then $\text{Bs}|\omega_C|=\emptyset$ or $C\simeq \mathbb{P}^1$ so that one concludes by Corollary \ref{c:dim0}.
\end{proof}
The results by Catanese in \cite{Cata82} allow to study the case of a stable curve. 
Since any stable curve is a reduced Gorenstein projective curve, by Theorem D in  \cite{Cata82}, if $C$ is a stable curve, then the base locus $\text{Bs}|\omega_C|$ of the canonical sheaf of $C$ consists of
\begin{enumerate}\item {\it Disconnecting nodes} of $C$, that is, nodes $p\in C$ such that the normalization of $C$ at $p$ has two connected components and
\item Rational components $E$ of $C$ such that all singular points of $E$ are disconnecting nodes (following Catanese's notion, {\it loosely connected rational tails} (LCRT)).
\end{enumerate}

\begin{cor}\label{c:estable} Let $C$ be a stable curve over a field $k$ without LCRT's. 
 Then, $\Perf{(C)}$ is indecomposable and $\cdbc{C}$ is weakly indecomposable.
\end{cor}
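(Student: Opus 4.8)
The plan is to determine $\text{Bs}|\omega_C|$ from Catanese's theorem, observe that it is a finite set of closed points, and then combine Theorem \ref{t:indescomPerf} with Proposition \ref{p:dim0}; the statement about $\cdbc{C}$ will follow from the indecomposability of $\Perf(C)$ exactly as in the proof of Corollary \ref{c:empty}. Concretely, a stable curve $C$ over $k$ is connected, reduced, projective and Gorenstein of pure dimension $1$, hence Cohen-Macaulay, so $D_{C/k}=\omega_C[1]$ and, by the Example following Definition \ref{relativebaselocus}, $\text{Bs}|\pi|=\text{Bs}|\omega_C|$ for the structure morphism $\pi\colon C\to\Spec k$. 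By Theorem D of \cite{Cata82} (recalled above) together with the assumption that $C$ has no LCRT's, $\text{Bs}|\omega_C|$ consists only of the disconnecting nodes of $C$; in particular $Z:=\text{Bs}|\omega_C|$ is a finite set of closed points, so each connected component of $Z$ is a single closed point and therefore is contained in an affine open subset of $C$.

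Next I would show that $\Perf(C)$ is indecomposable. Let $\Perf(C)=\langle\mathcal{A},\mathcal{B}\rangle$ be a SOD. By Theorem \ref{t:indescomPerf}(1) one of the alternatives (1.1), (1.2) holds; say (1.1), the case (1.2) being symmetric. Then Theorem \ref{t:indescomPerf}(2) gives $\supp(b)\subseteq\text{Bs}|\pi|=Z$ for every $b\in\mathcal{B}$. Since every connected component of $Z$ is contained in an affine open subset of $C$, Proposition \ref{p:dim0} applies (with this $Z$) and shows that the SOD is trivial. Hence $\Perf(C)$ is indecomposable.

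For the weak indecomposability of $\cdbc{C}$ I would repeat the argument of Corollary \ref{c:empty}. Let $\cdbc{C}=\langle\mathcal{A},\mathcal{B}\rangle$ be an admissible SOD; by Lemma \ref{l: SODinducidas} it induces a SOD $\Perf(C)=\langle\mathcal{A}\cap\Perf(C),\mathcal{B}\cap\Perf(C)\rangle$, which is trivial by the previous step. Assuming $\mathcal{B}\cap\Perf(C)=0$ (the other case is symmetric), one gets $\Kos({\bf f}_x)\in\mathcal{A}$ for every closed point $x\in C$, so semiorthogonality forces $\mathbb{R}\cplx{\Hom}(b,\Kos({\bf f}_x))=0$ for every $b\in\mathcal{B}$ and every such $x$; since the complexes $\Kos({\bf f}_x)$ form a spanning class of $\cdbc{C}$, this gives $\mathcal{B}=0$, and the SOD is trivial. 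There is no serious obstacle here; the one point needing attention is that, unlike the situation of Corollary \ref{c:empty}, the base locus $\text{Bs}|\omega_C|$ is in general nonempty — for instance for a stable curve carrying a disconnecting node, such as two smooth genus-$1$ curves meeting at one point — so one must route the indecomposability of $\Perf(C)$ through Proposition \ref{p:dim0} via the finiteness of the base locus rather than invoke Corollary \ref{c:empty} directly; once that is recognised, the argument is a routine assembly of the earlier results.
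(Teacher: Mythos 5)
Your proof is correct and follows exactly the route the paper intends (the paper leaves this corollary without an explicit proof, but the preceding discussion of Catanese's Theorem~D makes clear that the argument is precisely yours): the no-LCRT hypothesis reduces $\text{Bs}|\omega_C|$ to the finite set of disconnecting nodes, after which Theorem~\ref{t:indescomPerf} plus Proposition~\ref{p:dim0} give indecomposability of $\Perf(C)$, and the Corollary~\ref{c:empty} argument transfers this to weak indecomposability of $\cdbc{C}$. Your closing remark that one cannot invoke Corollary~\ref{c:empty} directly because the base locus may be nonempty is exactly the right point to flag.
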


In all this examples, we get that  $\cdbc{C}$ is indecomposable if we assume the perfectness of the base field.
Compare these results with  Corollary 6.8 in \cite{KuzShi22}.

In the relative situation, notice that if $X$ and $T$ are two Gorenstein schemes,  Theorem \ref{t:relativeindescomPerf} applies to any proper morphism $f\colon X\to T$ whose relative dualizing sheaf doesn't  have base points, for instance when it is trivial. Then one has,

\begin{cor}\label{c:crepante} If $f\colon X\to T  $  is a crepant resolution of Gorenstein schemes, then $\Perf({X})$  has no non-trivial $T$-linear SODs.
\end{cor}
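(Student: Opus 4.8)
The plan is to deduce this as an immediate application of Theorem \ref{t:relativeindescomPerf}, so the whole content is to verify that the relative dualizing complex of a crepant resolution has no base points. First I would recall what "crepant resolution" means in this Gorenstein setting: $f\colon X\to T$ is a proper (birational) morphism of Gorenstein schemes with $X$ having at worst mild singularities — or genuinely nonsingular — such that $f^*\omega_T\cong \omega_X$, equivalently $D_{X/T}=f^!\OO_T\cong\OO_X$. The key point is that crepancy is exactly the statement that the \emph{relative} dualizing sheaf $\omega_{X/T}$ is trivial: since $\omega_X\cong\omega_{X/T}\otimes f^*\omega_T$ (using that $T$ is Gorenstein, so $\omega_T$ is a line bundle, and $f$ has finite type), the condition $f^*\omega_T\cong\omega_X$ is equivalent to $\omega_{X/T}\cong\OO_X$, hence $D_{X/T}=\OO_X[0]$ (the shift is $0$ because $f$ is birational, so relative dimension $0$).

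Next I would observe that $\text{Bs}|f|=\emptyset$. By Definition \ref{relativebaselocus}, a point $x\in X$ fails to be a base point of $f$ as soon as there is some $\cF\in\cdbc{T}$ and some $s\in H^i(X,f^!\cF)$ with $x\in\overset{\circ}{D(s)}$. Taking $\cF=\OO_T$ we have $f^!\OO_T=D_{X/T}=\OO_X$, and the identity section $s=1\in H^0(X,\OO_X)$ has $D(s)=X=\overset{\circ}{D(s)}$ (since $1(y)\neq 0$ at every point $y\in X$, the residue of $1$ being a unit). Thus every point of $X$ lies in $\overset{\circ}{D(s)}$ for this choice, so $X-\text{Bs}|f|=X$, i.e. $\text{Bs}|f|=\emptyset$. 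Here I am also using that $X$ is connected, which I would note is implicit in "crepant resolution" (a resolution is birational onto an integral target, hence $X$ is irreducible and in particular connected) — or one can simply add connectedness to the hypotheses as a standing assumption.

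Finally I would invoke Theorem \ref{t:relativeindescomPerf}: given any $T$-linear SOD $\Perf(X)=\langle\mathcal{A},\mathcal{B}\rangle$, part (1) says one of $\mathcal{A},\mathcal{B}$ contains $\Kos(\mathbf{f}_x)$ for every closed point $x\in X-\text{Bs}|f|=X$ and every system of parameters, and then part (2) says the support of every object in the other component is contained in $\text{Bs}|f|=\emptyset$, i.e. that component is zero. Hence the SOD is trivial. There is no real obstacle here; the only thing requiring a word of care is the translation between "crepant" and "$D_{X/T}\cong\OO_X$", and for that I would cite the standard compatibility of dualizing complexes $D_{X/T}\cong D_X\lotimes \bL f^* D_T^{-1}$ for a finite-type morphism between schemes admitting dualizing complexes (here everything is a single line bundle up to a shift that vanishes for a birational $f$). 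If one wants to avoid even mentioning the singularities of $X$, the cleanest phrasing of the hypothesis is just "$f\colon X\to T$ is a proper morphism of connected Gorenstein schemes with $\omega_{X/T}\cong\OO_X$", and then the corollary is literally the $\text{Bs}|f|=\emptyset$ case of Theorem \ref{t:relativeindescomPerf}.
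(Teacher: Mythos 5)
Your proposal is correct and follows exactly the paper's (very terse) argument: crepancy of a resolution between Gorenstein schemes gives $\omega_{X/T}\cong\OO_X$, hence $\text{Bs}|f|=\emptyset$, and Theorem \ref{t:relativeindescomPerf}(2) then forces one component of any $T$-linear SOD to have empty support. Your explicit check that the identity section of $f^!\OO_T\cong\OO_X$ witnesses the emptiness of the base locus, and your remark that connectedness of $X$ is needed (and implicit in ``resolution''), are both accurate and merely spell out what the paper leaves implicit.
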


 %%%%%%%%%%%%%%

\appendix

\section{Indecomposability of semistable curves\\ (by Shinnosuke Okawa)}

Let \(   X\) be a semistable curve over an algebraically closed field \(   \bfk \);
namely, a connected projective nodal curve such that the relative dualizing sheaf \(   \omega _{ X / \bfk }
\) of \(   X\) over \(   \bfk\) is nef, in the sense that \(   \deg   \omega _{ X / \bfk }   \vert    _{    C   }   \ge   0\) for any irreducible component \(   C   \hookrightarrow   X\). It is shown in Corollary \ref{c:estable} that \(   \Perf (X)\) admits no semiorthogonal decomposition provided that \(    \omega _{ X /\bfk }   \) is ample and there is no LCRT. The aim of this appendix is to answer the question by Alexander Kuznetsov whether Corollary \ref{c:estable} generalizes to all semistable curves. The author is indebted to him for asking the question and for his useful comments.
The author was partially supported by JSPS Grants-in-Aid for Scientific Research
(18H01120, 19KK0348, 20H01797, 20H01794, 21H04994, 23H01074).

\begin{prop}\label{proposition:semistable curves are indecomposable}    Let    \(      X   \)   be a nodal connected projective curve over an algebraically closed field    \(        \bfk    \).    Then    \(       \Perf       (X)    \)    is semiorthogonally indecomposable if and only if    \(       X    \)    is semistable.
\end{prop}

For the sake of completeness, we first show the following easy lemma. Below \(    \Pic    ^{     0    }    (    X   )\) denotes the identity component of the Picard scheme.

\begin{lem}\label{lemma:no global section}    Let    \(       X    \)    be a reduced and connected projective curve over    \(       \bfk    \)    and    \(       L       \in       \Pic       ^{        0       }       (        X       )    \)    such that    \(       L       \not       \simeq       \cO       _{        X       }    \).    Then    \(       H       ^{        0       }       \left(        X,        L       \right)       =       0    \).
\end{lem}

\begin{proof}    Suppose    \(       L       \in       \Pic       ^{        0       }       \left(        X       \right)    \)    admits a global section    \(       0       \neq       s       \in       H       ^{        0      }       \left(        X,        L       \right)           \).       

 If    \(       s       \vert       _{        C      }      \neq      0    \)    for any irreducible component    \(       C       \hookrightarrow       X    \),    then obviously    \(       s    \)    must be nowhere vanishing. This would imply    \(       L       \simeq       \cO       _{        X       }    \).

    Suppose that    \(       s       \vert       _{        C       }       =       0    \)    for some irreducible component      \(       C    \).    We can assume without loss of generality that there is another component    \(       D       \hookrightarrow       X    \)    which intersects    \(       C    \)    and    \(
       s       \vert       _{        D       }       \neq       0    \).    Since    \(       L       \vert       _{        D       }       \in       \Pic ^{ 0 }       \left(        D       \right)    \),    it follows that    \(        s        \vert        _{         D           } \)    is a nowhere vanishing global section of    \(        L        \vert        _{         D        }    \).    This contradicts    \(       C \cap D       \neq       \emptyset    \)    and    \(       s       \vert       _{        C       }       =       0    \).
\end{proof}

\begin{lem}\label{lemma:there is a nice line bundle}    Let    \(       X    \)    be a nodal projective curve over    \(       \bfk    \)    such that    \(       \dim       _{        \bfk       }       H ^{ 1 }       \left(        X,        \cO        _{            X        }       \right)       >       0    \).    Then there exists an invertible sheaf    \(       L       \not\simeq       \cO       _{        X       }    \)    such that    \(       L       \in       \Pic ^{ 0 } ( X )    \).
\end{lem}

\begin{proof}    Since    \(       \dim X       =       1    \),    it follows that    \(       H ^{ 2 }       \left(        X,        \cO _{ X }       \right)       =       0    \).    The deformation theory of coherent sheaves implies that    \(       \Pic ^{ 0 }       \left(        X       \right)    \)    is a smooth connected group scheme of dimension    \(        \dim        _{         \bfk        }        H ^{ 1 }        \left(         X,         \cO         _{             X         }        \right)        >        0    \).    Hence we can take     \(       L    \)    to be the line bundle corresponding to any point of    \(        \Pic ^{ 0 }        \left(         X        \right)        \setminus               \left\{           \cO _{ X }        \right\}            \).
\end{proof}

\begin{proof}[Proof of  Proposition \ref{proposition:semistable curves are indecomposable}]    Let us first assume    \(       X    \)    is semistable, and show that    \(       \Perf       (X)    \)    is indecomposable.    As in Theorem 1.4 in \cite{Lin2021}, let us consider the paracanonical base locus:
    
    \noindent    \begin{align}        \parabaselocus        (            X        )        :=        \bigcap        _{            L            \in            \Pic ^{ 0 } ( X )        }        \baselocus (            L            \otimes            \omega            _{                X                /                \bfk            }        )        \left(            \hookrightarrow            \baselocus            \omega            _{                X                /                \bfk            }            \hookrightarrow            X        \right)
    \end{align}
    As in the smooth case, if one can show that    \(        |        \parabaselocus        (            X        )        |        <        \infty    \), then the semiorthogonal indecomposability of    \(       \Perf (X)    \)    follows. This follows from that any semiorthogonal decomposition of    \(       \Perf (X)    \)    is stable under the action of    \(       \Pic       ^{        0       }       \left(        X       \right)    \)    by Theorem 3.8 in \cite{KawOka22}. Indeed, take a closed point    \(       x       \in       X       _{        \text{\rm sm}       }       \setminus       \parabaselocus ( X )    \). Let    \(       \Perf       (X)       =       \langle        \cA,        \cB        \rangle    \)    be a semiorthogonal decomposition and
    
\noindent
\begin{align}\label{equation:decomposition of k(x) bis}        b       \to        \cO       _{           x       }        \to        a        \to        b [ 1 ]    \end{align}    be the corresponding distinguished triangle. Then for any    \(       L       \in       \Pic       ^{        0       }       (        X       )    \)    it follows that    \(       b       \otimes       L       \in       \cB    \)    and    \(       a       \otimes       L       \in       \cA    \)    by the stability. Hence from~\eqref{equation:decomposition of k(x) bis}    \(       \otimes       L    \)    we get    \(       b       \otimes       L       \simeq       b    \)    and    \(       a       \otimes       L       \simeq       a    \).    By assumption there are    \(       L       \in       \Pic       ^{        0       }       (        X       )    \)    and    \(       s       \in       H ^{        0       }       \left(        X,        L        \otimes        \omega        _{            X            /            \bfk        }       \right)    \)    such that    \(       s ( x )       \neq       0    \).    Thus we get a map
    
\noindent
    \begin{align}        b        \simeq        b        \otimes        L        ^{            - 1        }        \xrightarrow[]{            s \cdot        }        b        \otimes        \omega        _{            X            /            \bfk        }
    \end{align}    which is an isomorphism on an open neighborhood of    \(       x    \).    Then by standard arguments (see, say, the proof of Theorem \ref{t:indescomPerf}) we can conclude that    \(       \cO       _{        x       }    \)    is contained in either    \(       \cA    \)    or    \(       \cB    \).    Then, finally, one can show that all points of    \(       X       _{        \text{\rm sm}       }       \setminus        \parabaselocus        (            X        )    \)    are simultaneously contained in either    \(       \cA    \)    or    \(       \cB    \)    by the arguments of Corollary \ref{c:dim0}    .
    
    In the rest we show that    \(        \parabaselocus        (            X        )    \)    is a finite set if    \(       X    \)    is a semistable curve.
    By Corollary 1.2 in \cite{Ran14}\footnote{Though Catanese states it only for stable curves in his original paper \cite{Cata82}, Ziv Ran asserts in \cite{Ran14} that the same holds for semistable curves as well. He also gives a complete proof, which applies to all semistable curves.}, it is enough to show that a general closed point of any LCRT component (or a separating line, after \cite{Ran14}) is not contained in
    \(        \parabaselocus        (            X        )    \).

    Let    \(        {\mathbb P} ^{ 1 }        \simeq       E       \hookrightarrow       X    \)    be an LCRT component. By definition, there are mutually disjoint subcurves    \(       X _{ 1 },       \dots,       X _{ r }       \hookrightarrow       X    \)
    % of arithmetic genera
    % \(
    %    > 0
    % \)
    for some    \(       r       \ge       2    \)    such that    \(       X       =       \left(        \bigcup        _{         i = 1        }        ^{         r        }        X        _{         i        }       \right)       \cup       E    \)    and    \(       | X _{ i } \cap E |       =       1    \)    for    \(       i       =       1,       \dots,       r    \)    (hence the name \emph{comb}).    Then it follows for each    \(       i    \)    that    \(        \dim       _{        \bfk       }       H ^{ 1 }       \left(        X _{ i },        \cO        _{            X _{ i }        }       \right)       >       0    \).    Indeed, otherwise    \(       X _{ i}    \)    is a tree of    \(       {\mathbb P} ^{ 1 }    \)s    and hence the component of    \(        X _{ i }    \)    which corresponds to a leaf of the dual graph of    \(        X _{ i }    \)    would destabilize       \(       X    \).    Hence we can apply Lemma \ref{lemma:there is a nice line bundle} to each    \(       X _{ i }    \)    to obtain a non-trivial invertible sheaf    \(       L _{ i }       \in       \Pic ^{ 0 }       \left(        X _{ i }       \right)    \).    Now let    \(       L       \in       \Pic       ^{        0       }       (        X       )    \)    be such that    \(       L       \vert       _{        X        _{            i        }       }       \simeq       L _{ i }    \)    for all    \(       i       =       1,       \dots,       r    \)    (see, say, the 1st paragraph of Example 0.7.2 in \cite{CD89} for the existence of such    \(       L    \)).

    For a general closed point    \(       x       \in       E    \)    consider the following short exact sequence
    
  \noindent
 \begin{align}        0        \to        L        \otimes        \omega        _{            X            /            \bfk        }        ( - x )        \to        L        \otimes        \omega        _{            X            /            \bfk        }        \to        \cO        _{            x        }        \to        0,
    \end{align}    from which we obtain the following exact sequence.
    
    \noindent
\begin{align}        H        ^{            0        }        \left(            X,            L            \otimes            \omega            _{                X                /                \bfk            }        \right)        \to        H        ^{            0        }        \left(            \cO            _{                x            }        \right)        \to        H        ^{            1        }        \left(            X,            L            \otimes            \omega            _{                X                /                \bfk            }            (                - x            )        \right)
    \end{align}    The last term is isomorphic to the dual of the following vector space by the Serre duality.
    
    \noindent
\begin{align}\label{equation:what we want to kill}        H        ^{            0        }        \left(            X,            L            ^{                - 1            }            (                x            )        \right)
    \end{align}    By Lemma \ref{lemma:no global section},    for each    \(       i       =       1,       \dots,       r    \)    we have    \(       H       ^{        0       }       \left(        X        _{            i        },        L        ^{            - 1        }        (            x        )        \vert        _{            X            _{                i            }        }       \right)       =       0    \).    Finally, as    \(        L        ^{            - 1        }        (            x        )        \vert        _{            E        }        =        \cO        _{            E        }        (            1        )    \)    and    \(    r    \ge    2    \),    we conclude that~\eqref{equation:what we want to kill} vanishes.

    Conversely, let    \(       X    \)    be a curve as in the assertion which is not semistable. Then there must be an irreducible component    \(       C       \hookrightarrow       X    \)    such that    \(       C       \simeq       {\mathbb P} ^{ 1 }    \)    and    \(       C    \)    meets the other irreducible components of
    \(       X    \)    in exactly    \(       1    \)    point. Then Proposition 6.15 in  \cite{KuzShi22a} implies that there is a nontrivial semiorthogonal decomposition of    \(       \cdbc{ X}    \),    which implies that    \(       \Perf (X)    \)    admits a nontrivial semiorthogonal decomposition by Corollary 6.6 in \cite{KuzShi22}.
\end{proof}

\begin{rem}    When the components    \(       X       _{        1       },       \dots,       X       _{        r       }    \)    in the proof of Proposition \ref{proposition:semistable curves are indecomposable}    are all smooth, then we can prove Proposition \ref{proposition:semistable curves are indecomposable} by means of the relative canonical base locus. This can be regarded as a sophisticated version of Lin's argument.

    Indeed, in this case we have a morphism as follows, which contracts the LCRT   \(       E    \)    in the proof of Proposition \ref{proposition:semistable curves are indecomposable}    to a point.
    
    \noindent
\begin{align}        f        \colon        X        \to        \prod        _{            i = 1        }        ^{            r        }        \Pic        ^{            0        }        \left(            X            _{                i           }        \right)
    \end{align}    One can easily show that    \(     \text{Bs} |f|    =       \emptyset    \)    (recall that    \(       r       \ge       2    \)).    Hence there is no    \(       f    \)-linear semiorthogonal decomposition of    \(       \Perf       (X)    \)    by Theorem \ref{t:relativeindescomPerf}.    Now since    \(        \prod        _{            i = 1        }        ^{            r        }        \Pic        ^{            0        }        \left(            X            _{                i            }        \right)    \)    is an abelian variety, any semiorthogonal decomposition of    \(       \Perf       (X)    \)    is    \(       f    \)-linear by Theorem 1.4 in \cite{Piroz2023}. Hence the conclusion.    It is interesting to ask if one can push the same strategy without assuming    \(       X       _{        i       }    \)    are smooth.
\end{rem}

\bibliographystyle{siam}

  \end{document}